\newcommand{\Z}{\mathcal{Z}}            
\newcommand{\W}{\mathcal{W}}            
\newcommand{\I}{\mathcal{I}}            %
\newcommand{\J}{\mathcal{J}}            %
\newcommand{\N}{\mathcal{N}}            
\newcommand{\ID}{\mathbb{D}}            
\newcommand{\IC}{\mathbb{C}}            
\newcommand{\dbar}{\bar\partial}
\newcommand{\laplace}{\partial\dbar}
\newcommand{\invL}{\mathop{\tilde\Delta}}
\newcommand{\grad}{\nabla}
\newcommand{\term}{\emph}
\newcommand{\st}{:}
\newcommand{\re}{\mathop{\mathrm{Re}}\nolimits}
\newcommand{\avg}{\mathop{\mathrm{avg}}\nolimits}
\newcommand{\defeq}{\stackrel{\mathrm{def}}=}
\let\eps\epsilon
\let\intersect\cap
\let\union\cup
\let\Union\bigcup
\let\phi\varphi
\let\term\emph
\newtheorem{theorem}{Theorem}[section]
\newtheorem{lemma}[theorem]{Lemma}
\newtheorem{proposition}[theorem]{Proposition}
\newtheorem{Athm}{Theorem}
\theoremstyle{remark}
\newtheorem{remark}{Remark}
\newtheorem{example}{Example}
\numberwithin{equation}{section}
\begin{document}

\title[Interpolation without Separation] %
{Interpolation without Separation in Bergman Spaces}
\author{Daniel H. Luecking}
\address{Department of Mathematical Sciences\\
         University of Arkansas\\
         Fayetteville, Arkansas 72701}
\email{luecking@uark.edu}
\date{Sept. 9, 2004\thanks{This paper was indeed written in 2004, the
results having been presented at the AMS meeting in Nashville, TN, that
year. It languished on my personal web page until I uploaded it to arxiv
in 2014. Almost immediately thereafter, I received an email pointing me
to some related work. Section~\ref{sec:addendum} was added in response
to that; apart from minor corrections, that is the only addition since
2004.}}

\subjclass{Primary 46E20}
\keywords{Bergman space, interpolating sequence, upper density,
uniformly discrete, d-bar equation}

\begin{abstract}
    Most characterizations of interpolating sequences for Bergman spaces
    include the condition that the sequence be uniformly discrete in the
    hyperbolic metric. We show that if the notion of interpolation is
    suitably generalized, two of these characterizations remain valid
    without that condition. The general interpolation we consider
    here includes the usual simple interpolation and multiple
    interpolation as special cases.
\end{abstract}

\maketitle

\section{Introduction}\label{sec:introduction}

Let $dA$ denote area measure and let $G$ be a domain in the complex
plain. Let $L^p(G)= L^p(G,dA)$ be the usual Lebesgue
space of measurable functions $f$ with $\| f \|_{p,G}^p = \int_G |f|^p
\,dA < \infty$. The \term{Bergman space $A^p(G)$} is the closed
subspace of $L^p(G)$ consisting of analytic functions. If $1 \le p <
\infty$, these are Banach spaces and if $0<p<1$ they are quasi-Banach
spaces. We will allow all $0<p<\infty$ and abuse the terminology by
calling $\| \cdot \|_{p,G}$ a norm even when $p < 1$.  In the case where
$G=\ID$, the open unit disk, we will abbreviate: $L^p = L^p(\ID)$,
$A^p= A^p(\ID)$~and $\| \cdot \|_p = \| \cdot \|_{p,\ID}$.

Let $\psi(z,\zeta)$ denote the \term{pseudohyperbolic metric}:
\begin{equation*}
  \psi(z,\zeta) = \left|\frac{z-\zeta}{1-\bar\zeta z}\right|.
\end{equation*}
We will use $D(z,r)$ for the \term{pseudohyperbolic disk} of radius $r$
centered at $z$, that is, the ball of radius $r<1$ in the
pseudohyperbolic metric. Let $\rho(z) = 1 - |z|$, denote the distance of
a point to the boundary of $\ID$ and, for a set $S$ let $\rho(S) =
\max\{ \rho(z) \st z\in S \}$. Moreover, let $d\lambda(z) = (1 -
|z|^2)^{-2}dA(z)$ denote the \term{invariant area measure} on $\ID$.

We use $|S|$ to denote the Euclidean area of a set $S$ and we recall
that $|D(z,r)|$ behaves like $(1 - |z|)^2$ as $|z| \to 1$ for fixed $r <
1$, and like $r^2$ as $r\to 0$ for fixed $z$. We also recall that
$\rho(D(z,r))$ behaves like $1 - |z|$ for fixed $r$.

Let $\Z= \{ z_k : k=1,2,3,\dots \}$ be a sequence in $\ID$ without limit
points in $\ID$. The sequence $\Z$ is said to be \term{uniformly
discrete} if there is a lower bound on the pseudohyperbolic distance
between entries. That is, there is an $\epsilon > 0$ such that
\begin{equation*}
     \psi(z_k,z_n) > \epsilon \quad \text{ \ for all $k\ne n$}.
\end{equation*}
A uniformly discrete sequence necessarily contains no repeated values.

The usual $A^p$ interpolation problem for $\Z$ is the following: given a
sequence $w = (w_k)$ of complex numbers satisfying $\sum_{j=1}^\infty
|w_j|^p (1 - |z_j|^2)^2 < \infty$, find a function $f\in A^p$ such that
$f(z_k) = w_k$ for all $k$. The sequence $\Z$ is called an
\term{interpolating sequence for $A^p$} if every such interpolation
problem has a solution. It is well known that interpolating sequences
have to be uniformly discrete. K.~Seip characterized interpolating
sequences for $A^p$ as those that are uniformly discrete and in addition
satisfy a certain bound on the upper uniform density (to be defined in
section~\ref{sec:density}): $D^+(\Z) < 1/p$.

M. Krosky and A. Schuster \cite{KS01} investigated multiple
interpolation, in which not only the values of $f$ at $z_k$ but also the
values of its derivatives at $z_k$ can be prescribed. The multiple
interpolation problem is the following: given a sequence $w = (w_k)$ of
$n$-tuples $w_k = (w_k^{(0)}, w_k^{(1)}, \dots, w_k^{(n-1)})$ satisfying
$\sum_{k=1}^\infty \sum_{l=0}^{n-1} |w_k^{(l)}|^p (1 - |z_k|^2)^{lp+2}$,
find $f\in A^p$ such that $f^{(l)}(z_k) = w_k^{(l)}$ for all $0\le l\le
n-1$ and all $k\ge 1$. A sequence $\Z$ is called a \term{multiple
interpolating sequence of order $n$ for $A^p$} if this interpolation
problem has a solution for every such sequence $w$.

Krosky and Schuster showed that multiple interpolating sequences $\Z$
are characterized by being uniformly discrete and satisfying the density
bound $D^+(\Z) < 1/(np)$. However, if we encode the multiplicity of the
interpolation by repeating each distinct point $z_k$ in $\Z$ the
appropriate number of times, then the density condition is the same as
Seip's: $D^+(\Z) < 1/p$. The resulting sequence $\Z$ is not uniformly
discrete, only the sequence of distinct values is.

In this paper we will to remove even this requirement that $\Z$ have
uniformly discrete values. We will formulate a more general interpolation
problem in $A^p$, and show that any sequence (not necessarily uniformly discrete)
is an interpolating sequence for this problem if and only if
$D^+(\Z) < 1/p$. This will cover the case of multiple interpolation at
repeated points, but it will also include consideration of distinct
points without a positive lower bound on $\psi(z_k,z_n)$.

In \cite{Lue04a} this author showed that the usual interpolating
sequences could also be characterized in terms of a $\dbar$-problem in a
suitable weighted $L^p$ space. The function
\begin{equation*}
    k_\Z(\zeta) = \sum_{a\in\Z} k_{a}(\zeta),
    \quad\text{with}\quad k_a(\zeta) =
    \frac{|\zeta|^2}{2}\frac{(1-|a|^2)^2}{|1 - \bar a z|^2}
\end{equation*}
completely characterizes zero sequences for $A^p$ (\cite{Lue96}) in the
following way: $\Z$ is a zero sequence for $A^p$ if and only if the
weighted space $L^p_\Z = L^p(\ID, \exp(pk_\Z) \,dA)$ contains nonzero
analytic functions. We pose the following weighted $\dbar$-problem:
given $f\in L^p_\Z$, find a function $u \in L^p_\Z$ satisfying
\begin{equation}\label{eq:dbar}
    (1 - |z|^2)\dbar u(z) = f(z)
\end{equation}
in $\ID$. We will say that \term{the $\dbar$-problem has solutions
with bounds in $L^p_\Z$} if there is a constant $C$ such that for any
$f\in L^p_\Z$ there is a solution $u$ to \eqref{eq:dbar} in $L^p_\Z$
satisfying $\| u \|_{p,\Z} \le C\| f \|_{p,\Z}$. The notation $\| \cdot
\|_{p,\Z}$ denotes the obvious norm for $L^p_\Z$.

The main result of \cite{Lue04a} is that $\Z$ is an interpolating
sequence for $A^p$ if and only if it is uniformly discrete and the
$\dbar$-problem has solutions with bounds in $L^p_\Z$. In case $p < 1$
one has to modify the space $L^p_\Z$ so that its elements are locally
integrable in a suitable way, but otherwise the result is valid for all
$0 < p < \infty$.

In this paper we will show that for the newly formulated general
interpolation problem, any sequence having bounded density is
interpolating for $A^p$ if and only if the $\dbar$-problem has solutions
with bounds in $L^p_\Z$. A sequence has bounded density if for a given
$0 < R < 1$ the cardinality of the set of $k$ with $z_k\in D(z,R)$ has
an upper bound independent of $z$.

In the next section we will formulate the general interpolation
problem for $A^p$ and define general interpolating sequences for
$A^p$ to be those where all such problems have a solution.

In section~\ref{sec:properties} we will prove some basic properties of
general interpolating sequences for $A^p$. This will include two
necessary conditions that are so basic to interpolation that we will
call sequences that satisfy them admissible sequences. Then in
section~\ref{sec:admissible} we show that, in fact, sequences are
admissible if and only if they have bounded density.

In section~\ref{sec:density} we will show that if a sequence $\Z$ is a
general interpolating sequences for $A^p$, then it satisfies the
mentioned upper density inequality $D^+(\Z) < 1/p$. We will complete the
proof in section~\ref{sec:interpolating}, showing that the density
condition allows us to construct a solution operator for the
$\dbar$-equation, and solutions of the $\dbar$-equation allow us to
construct solutions of any interpolation problem. Our proof that the
density condition implies solutions of the $\dbar$-equation is more
direct and simpler than our proof of a similar implication in
\cite{Lue04a}. The proof does not appeal to J.~Ortega-Cerd\`a's result on
weighted $\dbar$-problems (\cite{Ort02}) but does make use of properties
of the weight $e^{pk_\Z}$ that Ortega-Cerd\`a's work does not assume.
Apart from this, once the main properties of general interpolating
sequences are established, the arguments of sections
\ref{sec:density}~and \ref{sec:interpolating} do not differ much from
those in \cite{Lue04a}.

In the final section we supply a few examples of general
interpolation problems and the associated interpolating sequences. We
will also observe that the methods apply without significant change to
the usual weighted Bergman spaces (with the $L^p(dA_\alpha)$ norm, where
$dA_\alpha(z) = (1 - |z|^2)^\alpha\,dA(z)$, $\alpha > -1$). The
applicable density condition is then $D^+(\Z) < (\alpha + 1)/p$ and the
appropriate space for the $\dbar$-problem is
$L^p(\ID,\exp(pk_\Z)\,dA_\alpha)$

\section{The interpolation process}\label{sec:scheme}

Before formulating the interpolation process, we answer the obvious
question: Why are interpolating sequences ``always'' required to be
uniformly discrete? We answer this question via the following
proposition. Similar results have been rather vaguely stated in several
places, but we have found no convenient reference in the following
generality. Let us define $M_a(z) = (a-z)/(1 - \bar a z)$, the
\term{Moebius transformation} of $\ID$ that takes $a$ to $0$ (and $0$
to $a$). Let $e^{(k)}$ denote the sequence having a $1$ in position $k$
and $0$ elsewhere. Let $P_k$ be the operator of projection onto the
$k$th component: if $w = (w_j)$ then $P_k(w) = w_k e^{(k)}$

\begin{proposition}\label{thm:discreteness}
  Let $A$ be a quasi-Banach space of analytic functions on $\ID$ and $\Z
  = \{ z_k : k= 1,2,\dots \}$ a sequence of distinct points in $\ID$.
  Let $X$ a quasi-Banach sequence space, the sequences being indexed the
  same as $\Z$. Assume the following:
  \begin{enumerate}
    \item For every $k$ the sequences $e^{(k)}$ belongs to $X$.
    \item For every $k$, $P_k$ takes $X$ continuously into $X$ and
        $\sup_k \| P_k \| < \infty$.
    \item There is a constant $C_A$ such that for any $k$, if
        $f\in A$ and $f(z_k) = 0$, then $f/M_{z_k} \in A$ and $\|
        f/M_{z_k} \|_A \le C_A\| f \|_A$.
  \end{enumerate}
  If the operator $\Phi$ taking $A$ to sequences via $\Phi(f)_k = f(z_k)$
  is continuous and onto $X$, then $\Z$ is uniformly discrete.
\end{proposition}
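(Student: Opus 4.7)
The plan is to use the open mapping theorem together with the division hypothesis (3) to manufacture, for any pair of distinct points $z_k, z_n$, an analytic function whose value at $z_n$ is exactly $1/\psi(z_k,z_n)$ while its norm in $A$ stays under control. Since evaluation at $z_n$ must be continuous as a map into $X$ (combined with hypothesis (2)), this forces $\psi(z_k,z_n)$ to be bounded below.

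More precisely, I would proceed as follows. Since both $A$ and $X$ are quasi-Banach spaces (hence F-spaces) and $\Phi\colon A\to X$ is continuous and surjective, the open mapping theorem furnishes a constant $C$ such that for every $w\in X$ there is $f\in A$ with $\Phi(f)=w$ and $\|f\|_A\le C\|w\|_X$. Applying this to the basis vector $e^{(n)}$ (nonzero in $X$ by (1)), choose $f_n\in A$ with $f_n(z_j)=\delta_{nj}$ and $\|f_n\|_A\le C\|e^{(n)}\|_X$. For any $k\ne n$ we have $f_n(z_k)=0$, so by (3) the quotient $g\defeq f_n/M_{z_k}$ lies in $A$ with $\|g\|_A\le C_A\|f_n\|_A\le C_A\, C\,\|e^{(n)}\|_X$.

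Now evaluate $g$ at $z_n$: since $f_n(z_n)=1$ and $|M_{z_k}(z_n)|=\psi(z_k,z_n)$,
\begin{equation*}
    |g(z_n)| \;=\; \frac{1}{\psi(z_k,z_n)}.
\end{equation*}
On the other hand $\Phi(g)\in X$ with $\|\Phi(g)\|_X\le \|\Phi\|\cdot\|g\|_A$, and applying $P_n$ together with hypothesis (2) gives
\begin{equation*}
    |g(z_n)|\,\|e^{(n)}\|_X \;=\; \|P_n\Phi(g)\|_X
    \;\le\; \bigl(\sup_j\|P_j\|\bigr)\,\|\Phi\|\,C_A\,C\,\|e^{(n)}\|_X.
\end{equation*}
The key observation is that $\|e^{(n)}\|_X>0$ cancels, leaving $|g(z_n)|\le M$ for a constant $M$ independent of $k$ and $n$. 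Hence $\psi(z_k,z_n)\ge 1/M$, which is uniform discreteness.

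The only conceptual obstacle is ensuring that the cancellation of $\|e^{(n)}\|_X$ really does deliver a bound independent of $n$; this is why the problem is set up with $e^{(n)}$ on both sides (one through the open mapping estimate, the other through $P_n$). Apart from that, everything is a direct chain of the three hypotheses plus the open mapping theorem, which is available in the F-space setting of quasi-Banach spaces.
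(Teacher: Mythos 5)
Your proof is correct and takes essentially the same route as the paper: open mapping theorem, choose a basis-vector target (you use $e^{(n)}$; the paper uses $M_{z_n}(z_k)e^{(k)}$), divide by the appropriate Moebius factor using hypothesis (3), then close the loop via $P_n$ and cancel $\|e^{(n)}\|_X$. The only difference is the normalization — the paper arranges $g(z_k)=1$ and reads off the separation from the target's norm, while you arrange $g(z_n)=1/\psi(z_k,z_n)$ and read it off from the value — but these are the same estimate.
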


\begin{proof}
By the open mapping principle there is a constant $K$ such that for every
sequence $w\in X$ there is a function $f \in A$ such that $\Phi(f) = w$
and $\| f \|_A \le K\| w \|_X$.

Let $z_k, z_n \in \Z$, $k \ne n$. The assumptions imply that the
sequence $w = M_{z_n}(z_k)e^{(k)}$ belongs to $X$. Let $f \in A$ with
$\Phi(f) = w$ and $\| f \|_A \le K\| w \|_X$. Since $f$ vanishes at
$z_n$ we have $g = f/M_{z_n} \in A$. Note that $g(z_k) = 1$, and so
$P_k(\Phi(g)) = e^{(k)}$. Then we have the following
inequalities:
\begin{equation*}
    \| e^{(k)} \| \le C_1\| \Phi(g) \| \le C_2 \| g \| \le
    C_3 \| f \| \le C_4 \| w \| = C_4  |M_{z_n}(z_k)| \| e^{(k)} \|,
\end{equation*}
where $C_1 = \sup_k\| P_k \|$, $C_2 = C_1\| \Phi \|$, $C_3 = C_A C_2$,
and $C_4 = KC_3$. We immediately obtain $\psi(z_k,z_n) = |M_{z_n}(z_k)|
\ge 1/C_4$.
\end{proof}

So the keys to uniform discreteness are the lattice properties of the
usual sequence spaces and the ability to divide out zeros in a uniform
way in the usual function spaces. We note that the sequence space $X$ in
the previous proof could be vector-valued with essentially the same
result. For example, the multiple interpolation problem is governed by
the above proposition. Partly for this reason it seems impossible to do
away completely with some sort of separation condition (see
Theorem~\ref{thm:lowerbound})

Now consider the multiple interpolation problem (of which the simple
interpolation problem is the special case $n=1$) from our introduction.
In that problem we have associated to each distinct point $z_k$ in $\Z$
a finite dimensional space, namely $\IC^n$. There is also an associated
linear \term{interpolation operator} taking analytic functions $f$ into
that space, namely the evaluation of $f$~and its first $n-1$ derivatives
at $z_k$.

An equivalent way to do multiple interpolation is to replace $\IC^n$
with a quotient space: all functions analytic in a neighborhood $G_k$ of
$z_k$ divided by the ideal of all such functions that have a zero of
order at least $n$ at $z_k$. And replace the operator of evaluating $f$
and its derivatives at $z_k$ with that of restricting $f$ to $G_k$ and
applying the quotient mapping. One need only determine a suitable norm
on this quotient space and a natural choice (given that we are
investigating interpolation in $A^p$) would be the quotient norm in
$A^p(G_k)$. This requires us to make a choice of the neighborhood $G_k$.
It is relatively straightforward (see section~\ref{sec:examples}) to
verify that a choice producing a norm equivalent to the norm used in the
introduction is to let each $G_k$ be a pseudohyperbolic disk $D(z_k, R)$
for some fixed radius $R<1$.

Once it has been shown that a multiple interpolating sequence must be
uniformly discrete, it is possible to replace the radius $R$ chosen with
one making the $G_k$ disjoint, but this is by no means necessary for
defining the quotient space and its norm.

For our interpolation process we will \emph{start} with a sequence of
domains $G_k$. If we let $R_k$ denote the diameter of $G_k$ in the
pseudohyperbolic metric, we will require an upper bound $\sup_k R_k = R
< 1$. We also require, of course, that $\Z\subset \bigcup_k G_k$, but do
not require that the $G_k$ be disjoint. Then we partition the sequence
$\Z$ into disjoint finite subsequences $\Z_k$ where each $\Z_k$ belongs
to $G_k$. We will refer to the $\Z_k$ as \term{clusters}. We require
that a repeated point of $\Z$ has all its repetitions in the same
cluster. Finally, we require that $G_k$~and $\Z_k$ be chosen so that
$(\Z_k)_\epsilon \subset G_k$ for some $\epsilon > 0$ independent of
$k$, where the notation $(S)_\epsilon$ denotes the
$\epsilon$-neighborhood of a set $S$ in the pseudohyperbolic metric.

As we have seen, evaluating a function and its derivatives at a point is
equivalent to restricting the function to a neighborhood and applying a
quotient mapping. In this more general setting, we associate with each
cluster $\Z_k$ the subspace $\N_k= \N(\Z_k,G_k)$ of $A^p(G_k)$ consisting
of all functions whose zero set contains $Z_k$ (counting multiplicity)
and the corresponding finite dimensional dimensional quotient space $E_k
= E(\Z_k, G_k)= A^p(G_k)/\N_k$. The norm $\| \cdot \|_{E_k}$ on $E_k$ is
the quotient norm, that is, for $w_k\in E_k$
\begin{equation*}
  \| w_k \|_{E_k}^p = \inf \int_{G_k} |g|^p \,dA
\end{equation*}
where the infimum is taken over all functions $g$ in the equivalence
class $w_k$. A function $f$ interpolates $w_k$ if $f|_{G_k}$ belongs to
the equivalence class $w_k$.

We could equivalently take $E_k$ to be $\IC^{n_k}$ (where $n_k$ is the
cardinality of $\Z_k$ counting repetitions), and take the operator to be
evaluation of functions (and derivatives) at points of $\Z_k$, but we
would still define the norm of the $n_k$-tuple $w_k$ in the same way,
taking the infimum over all $g$ whose values~and derivatives produce
$w_k$.

One can do interpolation with almost any choice of finite dimensionsal
spaces $E_k$, together with a choice of norms on those spaces and a
choice of interpolation operator taking functions in $A^p$ into these
spaces.

In this paper, we will always define the spaces as above, always define
the norms as above, and always define the interpolation operator as
above. Thus, our interpolation process will depend only on the choice of
open sets $G_k$ and the choice of subdivision into clusters $\Z_k$. We
define the term \term{interpolation scheme} to mean a particular choice
of domains and a particular subdivision into clusters satisfying the
following two requirements:
\begin{enumerate}
  \item there exists $\eps>0$ such that $(\Z_k)_\eps \subset G_k$ for
        every $k$, and
  \item there exists $0<R<1$ such that for every $k$ the
        pseudohyperbolic diameter of $G_k$ is no more than $R$.
\end{enumerate}

It is convenient to assign a name such as $\I$ to an interpolation
scheme, especially when more than one such scheme is under discussion.
We will usually denote by $E_k$ the finite dimensional space defined and
normed as above and call them the \term{target spaces} of the scheme
$\I$. We will refer to the $\Z_k$ as the \term{clusters} of $\I$, the
$G_k$ as the \term{domains} of $\I$, the upper bound $R$ as the
\term{diameter} of $\I$ and the largest $\eps$ with $(\Z_k)_\eps \subset
G_k$ as the \term{inner radius} of $\I$.

Given an interpolation scheme $\I$, the \term{general interpolation
problem for $A^p$} is the following: given a sequence $(w_k)$ with $w_k
\in E_k$ for each $k$, satisfying $\sum \| w_k \|_{E_k}^p < \infty$,
find a function $f \in A^p$ such that $f|_{G_k}$ belongs to the
equivalence class $w_k$ for every $k$. We call $\Z$ an
interpolating sequence for $A^p$ \term{relative to the scheme $\I$} if every
such problem has a solution.

A sequence $\Z$ can be general interpolating for some choices of
$\I$ and not for others. However, we will soon see that if a sequence is
interpolating relative to a scheme $\I$, then two additional conditions
will have to be satisfied. We will call schemes that satisfy these
necessary conditions ``admissible'' and then it will turn out that a
sequence is interpolating relative to one admissible scheme if and only
if it is interpolating relative to any other admissible scheme. Hence we
will sometimes omit the modifier ``relative to to the scheme $\I$''.

We will show that a general interpolating sequence requires that there
be a positive lower bound on the pseudohyperbolic distance between
distinct clusters, and a finite upper bound on the number of points in
each cluster. These two together imply that the sequence $\Z$ has
bounded density. Conversely, we will see that for any sequence with
bounded density there is an admissible interpolation scheme. We will
also describe a simple way to select an admissible scheme.

We point out now that the purpose of the condition $(\Z_k)_\epsilon
\subset G_k$ is to ensure that the norm assigned to $E_k$ is reasonable.
Without such a lower bound on the distance to the boundary of $G_k$,
there can be functions in $A^p(G_k)$ with abitrarily small norm but with
arbitrarily large values. While this could be a perfectly valid problem,
it would make interpolation by functions in $A^p$ impossible.

Given an interpolation scheme $\I$, let us denote by $X^p_\I$ the $l^p$
direct sum of the $E_k$. That is, all sequences $w = (w_k)$ with $w_k
\in E_k$ satisfying
\begin{equation*}
    \| w \|_{X^p_\I}^p = \sum_{k=1}^\infty \| w_k \|_{E_k}^p < \infty.
\end{equation*}
Defining a sequence in $X^p_\I$ is equivalent to selecting functions
$f_k$ analytic in each $G_k$ with
$$
    \sum_{k=1}^\infty \int_{G_k} |f_k|^p \,dA < \infty.
$$
Of course there always exist minimizing functions such
that $\| f_k \|_{p,G_k} = \| w_k \|_{E_k}$. Interpolating by a function
$f\in A^p$ then means that $f_k - f|_{G_k} \in \N(\Z_k,G_k)$ for all $k$.

\section{Properties of interpolating sequences}\label{sec:properties}

Our first observation is that interpolating sequences are zero
sequences. For a nonzero analytic function $f$ we write $\Z(f)$ for the
sequence of zeros of $f$, repeated according to multiplicity.

\begin{proposition}\label{thm:zeroset}
Given an interpolation scheme $\I$ with domains $G_k$~and clusters
$\Z_k$, if $\Z = \Union_k \Z_k$ is general interpolating sequence
for $A^p$, then there is a function $f \in A^p$ such that $\Z(f) = \Z$.
\end{proposition}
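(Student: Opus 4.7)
The plan is to exhibit an $A^p$ function whose zero sequence (counted with multiplicity) contains $\Z$, and then pass from containment to equality via the subsequence property of $A^p$ zero sequences recorded in~\cite{Lue96}.

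Fix a single cluster, say $\Z_1$, and define data $w=(w_k)$ by $w_k = 0\in E_k$ for $k\ne 1$ and $w_1\in E_1 = A^p(G_1)/\N(\Z_1,G_1)$ equal to the equivalence class of the constant function~$1$. Since $\Z_1$ is nonempty, no element of $\N_1$ is the constant~$1$, so $w_1\ne 0$; and the constant~$1$ itself exhibits the bound $\|w_1\|_{E_1}^p \le |G_1|$ from the infimum definition of the quotient norm, so $w\in X^p_\I$. By the interpolation hypothesis there is a nonzero $g\in A^p$ with $\Phi(g) = w$. Then $g|_{G_k}\in \N_k$ for every $k\ne 1$, so $g$ vanishes on $\Z\setminus\Z_1$ with at least the multiplicities prescribed by $\Z$, while $g|_{G_1}$ is congruent to $1$ modulo $\N_1$ and hence does not vanish at any point of $\Z_1$.

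Now let $B$ be the finite Blaschke product whose zero sequence is exactly $\Z_1$ (with multiplicity). Then $|B|\le 1$ on $\ID$, so $F = gB\in A^p$. Because a repeated value of $\Z$ lies in a single cluster, distinct clusters have disjoint underlying point sets, and the multiplicity of any $z\in\Z$ in $\Z(F)$ is at least its multiplicity in $\Z$: contributed by $B$ alone when $z\in\Z_1$ and by $g$ when $z\in\Z\setminus\Z_1$. Hence $\Z\subseteq \Z(F)$ as sequences with multiplicity. The one genuinely external step, and the main obstacle, is the final one: I appeal to the fact from~\cite{Lue96} that any subsequence of an $A^p$ zero sequence is itself an $A^p$ zero sequence; applying this to $\Z\subseteq \Z(F)$ produces the required $f\in A^p$ with $\Z(f) = \Z$.
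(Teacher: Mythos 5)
Your proof is correct and takes essentially the same route as the paper's: interpolate the data that is $1$ (mod $\N_{k_0}$) on one nonempty cluster and $0$ elsewhere, multiply by something finite to restore the zeros on that cluster, and then invoke the fact that any subsequence of an $A^p$ zero sequence is again an $A^p$ zero sequence. The only cosmetic differences are that you use a finite Blaschke product where the paper uses a ``suitable polynomial'' (both are bounded on $\ID$, so either works), and that the subsequence property is attributed in the paper to Horowitz~\cite{Hor74} rather than to~\cite{Lue96}.
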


\begin{proof}
Choose a $k_0$ such that $\Z_{k_0}$ is not empty. Define a sequence in
$X^p_\I$ by selecting the function $f_{k_0}\equiv 1$ on $G_{k_0}$
and $f_k\equiv 0$ on all other $G_k$, $k \ne k_0$.

If $\Z$ is a general interpolating sequence for $A^p$ then there is
a function $g \in A^p$ which has value $1$ at all points of $\Z_{k_0}$
and $0$ at all other points (with multiplicity at least as great as that
of $\Z$). Multiply $g$ by a suitable polynomial to obtain a nontrivial
function $f$ in $A^p$ with $\Z\subset\Z(f)$. Since a subsequence of an
$A^p$ zero sequence is also an $A^p$ zero sequence (Horowitz
\cite{Hor74}) we have the result.
\end{proof}

The following is a crucial property of general interpolating
sequences. We could prove it by generalizing the proof of
Proposition~\ref{thm:discreteness}, except that we have not yet
established that the interpolation mapping from $A^p$ into $X^p_\I$ is
bounded.

\begin{theorem}[Separation]\label{thm:lowerbound}
  Given an interpolation scheme $\I$ with clusters $\Z_k$, if $\Z =
  \Union_k\Z_k$ is a general interpolating sequence for $A^p$ then
  there is a lower bound $\delta > 0$ on the pseudohyperbolic distance
  between different clusters of $\I$.
\end{theorem}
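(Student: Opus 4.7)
The plan is to first upgrade the surjectivity of the interpolation map
$\Phi\colon A^p\to X^p_\I$, $\Phi(f)_k=f|_{G_k}+\N_k$, to the existence of a
bounded right inverse, and then to exploit that right inverse via a
localized Schwarz lemma to force the separation. For the first half I would
invoke the closed graph theorem: $A^p$ and $X^p_\I$ are both F-spaces
(quasi-Banach when $p<1$), and because restriction $A^p(\ID)\to A^p(G_k)$
and the quotient $A^p(G_k)\to E_k$ are each contractive, convergence
$f_n\to f$ in $A^p$ forces $\Phi(f_n)_k\to\Phi(f)_k$ in every $E_k$. On the
other hand convergence $\Phi(f_n)\to w$ in $X^p_\I$ forces the same
componentwise limits to equal $w_k$, so uniqueness of the limit in $E_k$
makes the graph of $\Phi$ closed and hence $\Phi$ continuous; the open
mapping theorem then produces $K>0$ such that every $w\in X^p_\I$ equals
$\Phi(f)$ for some $f$ with $\|f\|_p\le K\|w\|_{X^p_\I}$.

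For the separation, fix distinct clusters $\Z_j$ and $\Z_k$, put
$d=\min\{\psi(a,b):a\in\Z_j,\,b\in\Z_k\}$, and let $a\in\Z_j$ and
$b_\ast\in\Z_k$ achieve it. A short computation gives
$\|[1]\|_{E_j}\asymp(1-|a|)^{2/p}$ (upper bound from the representative
$g\equiv 1$ combined with $|G_j|\asymp(1-|a|)^2$; lower bound from the
pointwise evaluation estimate at $a$ applied to any representative), so a
scalar $c$ with $|c|\asymp(1-|a|)^{-2/p}$ can be chosen to make $w_j:=[c]$
a unit vector of $E_j$; take $w_l=0$ for $l\ne j$. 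The right inverse then
furnishes $f\in A^p$ with $\|f\|_p\le K$, $f(a)=c$, and $f$ vanishing on
$\Z_k$ to the prescribed multiplicities. Now fix $R_0=(1+R)/2\in(R,1)$; the
subharmonicity of $|f|^p$ yields $\sup_{D(a,R_0)}|f|\le M$ where
$M:=C_0\|f\|_p(1-|a|)^{-2/p}$. Pulling back by the Moebius involution
$M_a$ and rescaling the argument by $R_0$ produces an analytic function on
$\ID$ of the same bound $M$ that vanishes (to the right multiplicities) at
the points $M_a(b)/R_0$ for $b\in\Z_k$. Provided $d<(1-R)/2$, which we may
assume (else $d$ is already bounded below), all these points lie in $\ID$,
and the iterated Schwarz lemma gives
\[
  |f(a)|\;\le\;M\prod_{b\in\Z_k}\!\left(\frac{\psi(a,b)}{R_0}\right)^{n(b)}
  \;\le\;\frac{d}{R_0}\,M,
\]
where $n(b)$ denotes the multiplicity of $b$: the factor contributed by
$b_\ast$ equals $d/R_0$ and all other factors are bounded by $1$. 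Combined
with $|f(a)|=|c|\asymp(1-|a|)^{-2/p}$ and $\|f\|_p\le K$, this forces
$d\ge R_0/(C_1K)=:\delta$, a lower bound independent of the pair $(j,k)$.

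The main obstacle is the first half. One cannot copy the proof of
Proposition~\ref{thm:discreteness} verbatim, both because $\Phi$ is not a
priori continuous and because its condition~(3) (a uniform $A^p$ bound on
dividing out a zero by a Moebius factor) fails for Bergman spaces in
general. The closed graph theorem sidesteps the continuity issue
essentially for free; the localized Schwarz estimate on $D(a,R_0)$ replaces
the global division by the Blaschke product of $\Z_k$ with a
maximum-modulus argument that survives this failure---the quotient need not
belong to $A^p(\ID)$, but the subharmonicity of $|f|^p$ on a
pseudohyperbolic disk of bounded diameter is already enough to convert
vanishing on $\Z_k$ into the crucial factor $d/R_0$.
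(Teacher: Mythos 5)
The first half as written has a real gap, and the paper explicitly flags the issue: immediately before stating the theorem, the paper observes that one cannot simply generalize Proposition~\ref{thm:discreteness} because ``we have not yet established that the interpolation mapping from $A^p$ into $X^p_\I$ is bounded.'' Your invocation of the closed graph theorem requires $\Phi$ to be an everywhere-defined linear map $A^p\to X^p_\I$, i.e., $\sum_k\|\Phi(f)_k\|_{E_k}^p<\infty$ for every $f\in A^p$. But a priori one only has $\sum_k\|\Phi(f)_k\|_{E_k}^p\le\sum_k\int_{G_k}|f|^p\,dA$, and without a uniform bound on how many of the domains $G_k$ can overlap at a point, this sum need not be finite. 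Bounded overlap is precisely what the paper deduces \emph{from} the Separation theorem, not something available beforehand; so showing that the graph is closed, as you do, does not by itself establish that $\Phi$ has full domain, and the closed graph theorem cannot be applied in the form you cite. The argument is salvageable: the set $G=\{(f,w)\in A^p\times X^p_\I : f|_{G_k}\text{ lies in the class }w_k\text{ for all }k\}$ is a closed linear subspace of the product (your componentwise-limit argument shows exactly this, since membership of the limit in $X^p_\I$ is inherited from $w$), hence itself a quasi-Banach space, and the coordinate projection $\pi_2\colon G\to X^p_\I$ is continuous and, by the interpolating hypothesis, surjective. The open mapping theorem applied to $\pi_2$ produces the bounded right inverse you need, without ever deciding whether $\Phi$ has full domain.

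With that repair the second half is correct and is genuinely different from the paper's argument. The paper's proof is a contradiction argument that never needs an interpolation constant: assuming clusters at pseudohyperbolic distances $\eps_j\to0$, a mean-value and subharmonicity estimate gives $\eps_j^{-p}|\gamma_j|^p(1-|\zeta_j|^2)^2\le C\int_{D(\zeta_j,r)}|f|^p\,dA$ for any interpolant $f$, and the $\gamma_j$ are then chosen so that the left side diverges while the data stay in $X^p_\I$. Your route fixes a pair of clusters and extracts a quantitative bound $d\ge cR_0/(C_0K)$ directly from the Blaschke-type estimate $|F(0)|\le M\prod_b(\psi(a,b)/R_0)^{n(b)}$. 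Both localize on a pseudohyperbolic disk of fixed radius and both rely only on subharmonicity of $|f|^p$; yours trades the paper's divergent-series packaging for the explicit constant $K$ delivered by the open mapping theorem. The details of the Schwarz step --- placing all of $\Z_k$ inside $D(a,R_0)$ under $d<(1-R)/2$ via the pseudohyperbolic triangle inequality, the two-sided estimate $\|[1]\|_{E_j}\asymp(1-|a|)^{2/p}$ coming from the inner radius of $\I$, the dominant factor $d/R_0$ contributed by $b_\ast$ with the remaining factors bounded by one --- all check out.
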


\begin{proof}
Suppose not; let $\{a_j\}$~and $\{ a_j' \}$ be subsequences of $\Z$ such
that $a_j$~and $a_j'$ lie in different clusters $\Z_j$~and $\Z_j'$ and
such that $\psi(a_j,a_j') = \eps_j \to 0$. Functions in $A^p$ satisfy
the following inequality: For a fixed positive $r<1$ there is a constant
$C_r$ such that
\begin{equation*}
   |f'(z)(1 - |z|^2)|^p \le  \frac{C_r}{|D(z,r)|} \int_{D(z,r)} |f|^p
   \,dA
\end{equation*}
Suppose $f$ has the value $\gamma_j$ at $a_j$ and $0$ at $a'_j$ for each
$j$. Then there exists a point $\zeta_j$ on the line segment connecting
$a_j$~and $a'_j$ where $|f'(\zeta_j)| \ge |\gamma_j|/|a_j - a'_j|$.
Combining this with the above inequality gives
\begin{equation*}
   |\gamma_j|^p(1 - |\zeta_j|^2)^2 \le \frac{|a_j - a'_j|^p}{(1 -
   |\zeta_j|^2)^p} \, \frac{C_r(1 - |\zeta_j|^2)^2}{|D(\zeta_j, r)|}
   \int_{D(\zeta_j, r)} |f|^p \,dA
\end{equation*}
The first fraction above is $O(\eps_j^p)$, the second is $O(1)$.
Therefore, if $f \in A^p$ and the disks $D(\zeta_j,r)$ are disjoint then
\begin{equation}\label{eq:toosmall}
  \sum_j \eps_j^{-p} |\gamma_j|^p(1 - |\zeta_j|^2)^2 \le C\| f \|_p^p <
  \infty.
\end{equation}
The disjointness of $D(\zeta_j,r)$ can easily be arranged by passing to a
subsequence if necessary. Select $\gamma_j$ so that $\sum |\gamma_j|^p(1
- |\zeta_j|^2)^2 < \infty$ but $\sum \eps_j^{-p} |\gamma_j|^p
(1-|\zeta|^2)^2 = \infty$. Define an element of $X^p_\I$ by selecting
functions that are constant on each domain $G_k$, equal to $\gamma_j$ on
the domain $G_j$ that contains $a_j$ for all $j$, and equal to 0 on
every other domain.

The sequence so defined clearly belongs to $X^p_\I$, but because of
inequality~\eqref{eq:toosmall} it cannot be interpolated by a function
in $A^p$.
\end{proof}

If we trim our interpolation scheme $\I$ by eliminating those domains
$G_k$ where $\Z_k$ is empty, it follows from the previous proposition
that the domains $G_k$ have a bounded amount of overlap. That is, the
function $\sum \chi_{G_k}$ is a bounded function. For if there is a
point $b\in\ID$ contained in $N$ of these sets, there would be at least
$N$ points from different $\Z_k$ in the disk $D(b,R)$, where $R$ is the
diameter of the scheme $\I$. By the proposition there is a $\delta > 0$
such that these $N$ points are all separated by $\delta$. These
estimates are in contradiction if $N$ can be made arbitrarily large.

For a function $f\in A^p$ we have $\sum \int_{G_k} |f|^p \,dA \le
\left\| \sum\chi_{G_k} \right\|_\infty^p \| f \|_p^p$. Thus, the
interpolation operator described above (taking $f$ to the sequence of
equivalence classes of $f|_{G_k}$) is bounded from $A^p$ into $X^p_\I$.
By the open mapping theorem, if $\Z$ is a general interpolating
sequence for $A^p$ then there is an finite \term{interpolation constant
$K$}. That is, for all $w \in X^p_\I$, there is a function $f_w$ in
$A^p$ which interpolates $w$ and satisfies the inequality $\| f_w \|_p
\le K\| w \|_{X^p_\I}$

We continue with the exposition of several properties of general
interpolating sequences. For example, they are hereditary, M\"obius
invariant and finitely extendable.

\begin{proposition}[Hereditary]\label{thm:hereditary}
  Given an interpolation scheme $\I$ with clusters $\Z_k$ and domains
  $G_k$ let $\J$ have clusters $\W_k \subset \Z_k$ and the same
  domains. If $\Z = \Union \Z_k$ is a general interpolating sequence
  for $A^p$ with interpolation constant $K$, relative to $\I$, then $\W
  = \Union \W_k$ is a general interpolating sequence for $A^p$
  relative to $\J$. Its interpolation constant is at most $K$.
\end{proposition}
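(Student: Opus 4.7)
The plan is to reduce any $\J$-interpolation problem to an $\I$-interpolation problem by observing that shrinking each cluster only \emph{relaxes} the constraints on representatives. Concretely, the inclusion $\W_k\subset\Z_k$ yields $\N(\Z_k,G_k)\subset\N(\W_k,G_k)$, so there is a natural contractive quotient map from $E(\Z_k,G_k)$ onto $E(\W_k,G_k)$. One should first check that $\J$ is itself a valid interpolation scheme: the diameter bound is inherited verbatim since the $G_k$ are unchanged, and the inner-radius condition follows from $(\W_k)_\eps\subset(\Z_k)_\eps\subset G_k$.

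Given a target sequence $w=(w_k)\in X^p_\J$, I would lift each $w_k$ to a representative $g_k\in A^p(G_k)$ belonging to the class $w_k$ with $\|g_k\|_{p,G_k}=\|w_k\|_{E(\W_k,G_k)}$; the infimum defining the quotient norm is attained because $\N(\W_k,G_k)$ has finite codimension, hence is closed in $A^p(G_k)$ (alternatively one can use $(1+\eta)$-approximate representatives and let $\eta\to 0$ at the end). Let $v_k$ denote the class of this same $g_k$ in $E(\Z_k,G_k)$. Then by definition of the quotient norm,
\[
 \|v_k\|_{E(\Z_k,G_k)}\le\|g_k\|_{p,G_k}=\|w_k\|_{E(\W_k,G_k)},
\]
so $v=(v_k)\in X^p_\I$ with $\|v\|_{X^p_\I}\le\|w\|_{X^p_\J}$.

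Now apply the hypothesis to $v$: there exists $f\in A^p$ whose restriction to each $G_k$ lies in the class $v_k$, with $\|f\|_p\le K\|v\|_{X^p_\I}$. By the choice of $v_k$ we have $f|_{G_k}-g_k\in\N(\Z_k,G_k)\subset\N(\W_k,G_k)$, so $f|_{G_k}$ also represents $w_k$ modulo the larger nullspace, i.e.\ $f$ solves the $\J$-interpolation problem for $w$. Chaining the two norm estimates gives $\|f\|_p\le K\|w\|_{X^p_\J}$, showing that $\W$ is general interpolating relative to $\J$ with interpolation constant at most $K$. I do not foresee any real obstacle here; the only mildly delicate point is the attainment (or $\eta$-approximate attainment) of the quotient-norm infima, which is routine in finite codimension.
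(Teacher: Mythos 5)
Your proof is correct and matches the paper's argument essentially verbatim: lift a $\J$-datum to minimizing representatives $g_k$, push forward to classes $v_k$ in $E(\Z_k,G_k)$ (contractively, via $\N(\Z_k,G_k)\subset\N(\W_k,G_k)$), interpolate with the $\I$-hypothesis, and observe the resulting $f$ also solves the $\J$-problem. The extra remarks on $\J$ being a valid scheme and on attainment of the quotient-norm infimum are harmless additions the paper leaves implicit.
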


\begin{proof}
Assume $\Z$ is a general interpolating sequence and $u=(u_k) \in X^p_{\J}$. Let
$E_k$ be the target spaces of $\I$ and $F_k$ denote the target spaces of
$\J$: $F_k = A^p(G_k)/\N(\W_k,G_k)$. For each $k$, select a function
$g_k$ in the equivalence class $u_k$ such that $\|g_k\|_{p,G_k}^p = \|
u_k \|_{F_k}$. The quotient mapping taking $g_k$ into $v_k \in E_k$ then
satisfies $\| v_k \|_{E_k} \le \| g_k \|_{p,G_k} = \| u_k \|_{F_k}$,
so $(v_k) \in X^p_\I$. If $f\in A^p$ interpolates $(v_k)$ that means that
$g_k - f|_{G_k} \in \N(\Z_k, G_k) \subset \N(\W_k, G_k)$ and so $f$
interpolates $(u_k)$.
\end{proof}

If we select one point from each cluster, it is easy to see that we get
a traditional interpolation problem. This is essentially how we
approached the separation between clusters: by examining a point from
each cluster.

\begin{proposition}[Invariant]\label{thm:invariant}
Let $\I$ be an interpolation scheme with clusters $\Z_k$ and domains
$G_k$. If $\Z = \Union \Z_k$ is a general interpolating sequence and
$\phi$ is a conformal map from $\ID$ onto itself, then $\phi(\Z)$ is a
general interpolating sequence relative to the scheme $\phi(\I)$
which has clusters $\phi(\Z_k)$ and domains $\phi(G_k)$. Moreover, both
sequences have the same interpolation constant.
\end{proposition}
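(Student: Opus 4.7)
The plan is to transport everything by the weighted composition operator that makes $A^p(\ID)$ isometric to itself under Möbius maps. Recall that for a conformal self-map $\phi$ of $\ID$, the derivative $\phi'$ is non-vanishing and holomorphic, so since $\ID$ is simply connected we may fix a holomorphic branch of $(\phi')^{2/p}$. Define the operator $T_\phi f = (f\circ\phi)(\phi')^{2/p}$; by the change-of-variables formula,
\begin{equation*}
   \int_{\phi^{-1}(G)} |T_\phi f|^p \,dA = \int_{\phi^{-1}(G)} |f(\phi(z))|^p |\phi'(z)|^2\,dA(z) = \int_G |f|^p \,dA,
\end{equation*}
so $T_\phi\st A^p(G)\to A^p(\phi^{-1}(G))$ is an isometric isomorphism for every domain $G$, and in particular on $A^p$ itself.

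Next I would check that $\phi(\I)$ is an interpolation scheme. Since Möbius transformations preserve the pseudohyperbolic metric, $\phi(G_k)$ has the same pseudohyperbolic diameter as $G_k$, and $(\phi(\Z_k))_\eps = \phi((\Z_k)_\eps)\subset\phi(G_k)$, so $\phi(\I)$ has the same diameter $R$ and inner radius $\eps$ as $\I$. I would then show that $T_\phi$ induces an isometry $\tau_\phi\st E(\phi(\Z_k),\phi(G_k))\to E(\Z_k,G_k)$: the key point is that $(\phi')^{2/p}$ is a non-vanishing holomorphic function on $G_k$, so $T_\phi$ carries the ideal $\N(\phi(\Z_k),\phi(G_k))$ bijectively onto $\N(\Z_k,G_k)$ (vanishing orders at corresponding points are preserved), and hence descends to the quotients. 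Combined with the isometry on $A^p(G_k)$ itself, this shows $\tau_\phi$ preserves the quotient norm, so that assembling the $\tau_\phi$ componentwise gives an isometric isomorphism $X^p_{\phi(\I)}\to X^p_\I$.

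The proof is then immediate: given data $u = (u_k')\in X^p_{\phi(\I)}$, let $w = (\tau_\phi u_k')\in X^p_\I$, which satisfies $\|w\|_{X^p_\I} = \|u\|_{X^p_{\phi(\I)}}$. By hypothesis there is $f\in A^p$ with $f|_{G_k}$ in the class $\tau_\phi u_k'$ and $\|f\|_p \le K\|w\|_{X^p_\I}$. Setting $g = T_{\phi^{-1}}f = (f\circ\phi^{-1})((\phi^{-1})')^{2/p}\in A^p$, we have $\|g\|_p = \|f\|_p$ and $g|_{\phi(G_k)}$ lies in the class $u_k'$ for every $k$, so $g$ interpolates $u$ with norm at most $K\|u\|_{X^p_{\phi(\I)}}$. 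The argument is symmetric in $\phi$ and $\phi^{-1}$, so the constants are equal.

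The only mildly subtle point is verifying that $\tau_\phi$ actually preserves the quotient norm rather than merely inducing a bounded map between the quotients; but this is a direct consequence of the fact that $T_\phi$ is an isometry on $A^p(G_k)$ that carries the ideal $\N$ onto the corresponding ideal on the image side, so infima over equivalence classes are preserved.
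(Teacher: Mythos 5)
Your proof is correct and is essentially the same as the paper's: both use the weighted composition isometry $f\mapsto (f\circ\phi)(\phi')^{2/p}$ of $A^p$ and of $A^p(\phi(G_k))\to A^p(G_k)$, note that it carries the ideal $\N(\phi(\Z_k),\phi(G_k))$ onto $\N(\Z_k,G_k)$ and hence induces an isometry of $X^p_{\phi(\I)}$ onto $X^p_\I$, and then transfer the interpolation problem and its solution. Your writeup merely spells out a few details (that $\phi(\I)$ has the same diameter and inner radius, and why the quotient norms are preserved) that the paper leaves implicit.
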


\begin{proof}
The map $f \to f\circ\phi\cdot(\phi')^{2/p}$ is an isometry of $A^p$
onto itself, but also an isometry of $A^p(\phi(G_k))$ onto $A^p(G_k)$
for each $k$. Moreover, it takes $\N(\phi(\Z_k),\phi(G_k))$ to
$\N(\Z_k,G_k)$ so it induces an isometry of the spaces
$X^p_{\phi(\I)}$~and $X^p_{\I}$. Consequently, an interpolation problem
in one setting transforms (isometrically) into an equivalent one in the
other and the solution in $A^p$ transforms (isometrically) back into a
solution in $A^p$.
\end{proof}

The following constants associated with an interpolation scheme $\I$ are
invariant under conformal maps: the diameter, the inner radius, the
separation between clusters, and the interpolation constant. Within the
context of $A^p$ interpolation, $p$ may also be considered to be
invariant. We will refer to any constant associated to an interpolation
scheme that is invariant under conformal maps as an
\term{interpolation invariant} or simply an \term{invariant of $\I$}.

\begin{proposition}[Extendable]\label{thm:extendable}
  Let $\I$ be an interpolation scheme with clusters $\Z_k$ and domains
  $G_k$. Suppose $\Z = \Union \Z_k$ is a general interpolating sequence
  and let $z_0\in\ID$. Suppose there is an $\epsilon > 0$ such that
  $\psi(z_0,\Z_k) > \eps$ for every $k$. Define a new scheme $\J$ whose
  domains are all the domains of $\I$ plus the domain $G_0 = D(z_0,1/2)$
  and whose clusters $\W_k$ are all the $\Z_k$ plus $\W_0 = \{ z_0 \}$.
  Then $\W = \{ z_0 \} \union \Z$ is a general interpolating sequence
  relative to the scheme $\J$.

  If $K$ is the interpolation constant for $\Z$ then the constant for
  $\W$ is at most $C_pK/\epsilon$, where $C_p$ is a positive constant that
  depends only on $p$.
\end{proposition}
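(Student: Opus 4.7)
Given a target $u=(u_k)_{k\ge 0}\in X^p_\J$, the plan is to build the interpolant $f\in A^p$ as $f=g+h$, where $g$ handles the targets at the old clusters $\Z_k$ ($k\ge 1$) via the $\I$-interpolation already available, and $h$ is a correction that establishes the correct value at $z_0$ without disturbing the interpolation on the other clusters. The sub-mean-value inequality on $G_0=D(z_0,1/2)$ identifies $E_0$ with $\IC$ by evaluation at $z_0$, with the norm equivalence $\|u_0\|_{E_0}\asymp|w_0|(1-|z_0|^2)^{2/p}$; here $w_0$ is the value at $z_0$ of any representative of $u_0$.

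First, apply the $\I$-interpolation to $(u_k)_{k\ge 1}\in X^p_\I$ to produce $g\in A^p$ with $g|_{G_k}$ in the class $u_k$ for each $k\ge 1$ and $\|g\|_p\le K\|u\|_{X^p_\J}$. The sub-mean-value inequality applied to $g$ yields $|g(z_0)|\le C_p(1-|z_0|^2)^{-2/p}\|g\|_p$, so setting $c:=w_0-g(z_0)$ we obtain $|c|\le C_p K(1-|z_0|^2)^{-2/p}\|u\|_{X^p_\J}$. The proof then reduces to constructing $h\in A^p$ with $h(z_0)=c$, $h|_{G_k}\in\N(\Z_k,G_k)$ for every $k\ge 1$, and $\|h\|_p\le(C_p/\epsilon)|c|(1-|z_0|^2)^{2/p}$; with such $h$, $f=g+h$ interpolates $u$ and $\|f\|_p\le(C_p' K/\epsilon)\|u\|_{X^p_\J}$.

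A natural way to build $h$ is to set $h=cL/L(z_0)$, where $L\in A^p$ is a ``Lagrange-type'' function satisfying $L|_{G_k}\in\N(\Z_k,G_k)$ for every $k\ge 1$, $L(z_0)\ne 0$, and $\|L\|_p/|L(z_0)|\le(C_p/\epsilon)(1-|z_0|^2)^{2/p}$. To construct such $L$, begin with a normalized peak $\phi(z)=((1-|z_0|^2)/(1-\bar z_0 z))^{\alpha}$ for some $\alpha$ with $\alpha p>2$, so that $\phi(z_0)=1$ and $\|\phi\|_p\asymp(1-|z_0|^2)^{2/p}$. The sequence $([\phi|_{G_k}])_{k\ge 1}$ lies in $X^p_\I$ by the bounded-overlap property of the $G_k$ noted just after Theorem~\ref{thm:lowerbound}, so the $\I$-interpolation produces $q\in A^p$ with $q|_{G_k}\equiv\phi|_{G_k}\pmod{\N(\Z_k,G_k)}$, and $L:=\phi-q$ then automatically satisfies $L|_{G_k}\in\N(\Z_k,G_k)$.

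The main obstacle is the quantitative norm estimate for $L$, specifically the $1/\epsilon$ dependence. Naively one only obtains $|L(z_0)|=|1-q(z_0)|\ge 1-C_p K$, which need not be positive. The separation hypothesis $\psi(z_0,\Z_k)>\epsilon$ must be used, in the spirit of the two-point Lagrange formula whose basis function has $A^p$-norm of order $1/\epsilon$. Concretely, one replaces $\phi$ by a more strongly localized version (so that $\sum_{k\ge 1}\|[\phi|_{G_k}]\|_{E_k}^p$ is small because $\phi$ decays rapidly in pseudohyperbolic neighborhoods of the separated clusters), or equivalently includes an explicit factor involving $M_{z_0}$ in the construction, exploiting $|M_{z_0}(z_j)|>\epsilon$ on $\Z$. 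Once one secures both $|L(z_0)|\ge 1/2$ and $\|L\|_p\le(C_p/\epsilon)(1-|z_0|^2)^{2/p}$, the rest of the argument is routine verification.
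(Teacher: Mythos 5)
Your decomposition $f=g+h$, with $g$ the $\I$-interpolant of $(u_k)_{k\ge 1}$ and $h$ a Lagrange-type corrector vanishing on every $\Z_k$ and hitting the right value at $z_0$, is a natural first idea, but the construction of $L$ is not completed, and this is a real gap rather than a routine verification. As you observe, the naive bound only gives $|L(z_0)|=|1-q(z_0)|\ge 1-C_pK$, which can be negative. The remedy you sketch (sharpen the peak $\phi$ so that $\sum_{k\ge 1}\|[\phi|_{G_k}]\|_{E_k}^p$ becomes small) is not established, and it is not at all clear it can be: the crude bound $\sum_k\|[\phi|_{G_k}]\|_{E_k}^p\le B'\|\phi\|_p^p\asymp(1-|z_0|^2)^2$ cannot be shrunk while keeping $\phi(z_0)=1$, so you would have to exploit $\psi(z_0,\Z_k)>\epsilon$ to produce, in each class $[\phi|_{G_k}]$, a representative of small $L^p(G_k)$-norm. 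That requires bounding the $L^p(G_k)$-norm of a Lagrange/Hermite interpolant of $\phi$ on $\Z_k$, which depends delicately on how close together (and how multiple) the points of $\Z_k$ are --- precisely the uncontrolled internal cluster geometry this framework is designed to avoid. Until both $|L(z_0)|\ge 1/2$ and $\|L\|_p\lesssim (1-|z_0|^2)^{2/p}/\epsilon$ are actually secured, the argument does not close.

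The paper avoids peak functions altogether by an algebraic reduction. Using Proposition~\ref{thm:invariant} (conformal invariance), one may take $z_0=0$, so $E_0$ is one-dimensional and $u_0$ is represented by a constant $g_0$. Choosing representatives $g_k$ of $u_k$ for $k\ge 1$, one forms $f_k(z)=(g_k(z)-g_0)/z$ on $G_k$; since $\psi(0,\Z_k)>\epsilon$, one gets $\|f_k\|_{p,G_k}^p\le (C_p/\epsilon^p)\bigl(\|g_k\|_{p,G_k}^p+|g_0|^p|G_k|\bigr)$, so $(f_k)\in X^p_\I$ with norm controlled by $\|u\|_{X^p_\J}/\epsilon$. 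One then interpolates $(f_k)$ by $f\in A^p$ using the given scheme and sets $h(z)=zf(z)+g_0$. Then $h(0)=g_0$ and $h-g_k=z(f-f_k)\in\N(\Z_k,G_k)$ on each $G_k$, so $h$ solves the $\J$-problem, with the $1/\epsilon$ entering transparently through the division by $z$. No lower bound on the value of an auxiliary function at $z_0$ is ever needed, which is exactly the obstruction in your attempt.
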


We will normally use this with $\epsilon = 1/4$ or $1/2$ and in that
case we see that the interpolation constant of the new sequence can be
estimated in terms of invariants of $\I$.

\begin{proof}
Because of the previous proposition, we may assume $z_0 = 0$. Note that
$E_0$, the quotient space associated with $0\in G_0$, is one
dimensional. Let $u=(u_k)$ be a sequence in $X^p_{\J}$ with $\| u
\|_{X^p_{\J}} = 1$ Let $(g_k)$ be a sequence of representative
functions satisfying $\| g_k \|_{p,G_k} = \| u_k \|_{E_k}$, $k \ge 0$.
We can identify $u_0$ with a constant function $g_0$ (this minimizes the
$L^p$ norm on a disk) and so it makes sense to consider the functions
$f_k(z) = (g_k(z) - g_0)/z$ on each $G_k$, $k \ge 1$. One easily
estimates
\begin{equation*}
    \| f_k \|_{p,G_k}^p \le \frac{C_p}{\epsilon^p} \left( \| g_k
    \|_{p,G_k}^p +  |g_0|^p|G_k| \right), \quad k\ge 1,
\end{equation*}
and so $f_k$ represents an element of $X^p_\I$. Let $f$ interpolate this
element, with $\| f \|_p^p \le K^p\sum \| f_k \|_{p,G_k}^p$. Finally,
let $h(z) = zf(z) + g_0$ and observe that it interpolates $u_k$ for $k
\ge 1$ but also satisfies $h(0) = g_0$. Since $\| g_0 \|_p^p = 4\|
g_0 \|_{p,G_0}^p \le 4\| u \|_{X^p_\J}^p$, we get the stated estimate on
the interpolation constant.
\end{proof}

We will say that the sequence $\Z$ \term{has bounded density} if there
is a positive constant $M$ and a fixed radius $0 < R < 1$ such that for
each $z\in \ID$, the disk $D(z,R)$ contains no more than $M$ terms
(counting multiplicity) of the sequence $\Z$. The number $M$ will depend
on $R$, but the actual value of $R$ will not affect whether a sequence
has bounded density.

The next theorem implies that a general interpolating sequence must
have bounded density.

\begin{theorem}[Bounded Density]\label{thm:boundeddensity}
  If $\Z$ is a general interpolating sequence for $A^p$ relative to
  an interpolation scheme $\I$ then there is a finite upper bound $B$ on
  the number of points (counting multiplicity) in each cluster $\Z_k$ of
  $\I$. The bound $B$ is obviously an invariant of $\I$.
\end{theorem}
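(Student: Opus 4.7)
The plan is to derive a contradiction by showing that a cluster of large size forces the interpolation constant (whose finiteness was noted after Theorem~\ref{thm:lowerbound}) to grow without bound. Suppose for contradiction that the cluster sizes are unbounded.

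By M\"obius invariance (Proposition~\ref{thm:invariant}) I may arrange that $G_k$ is pseudohyperbolically centered at the origin, so $G_k\subset D(0,R)$, and the inner-radius hypothesis then places $\Z_k\subset D(0,R')$ for a fixed $R'<R<1$. The space $E_k$ has dimension equal to $n:=|\Z_k|$ (counting multiplicity), and every class is represented uniquely by a polynomial of degree less than~$n$. The key idea is to choose the target $w_k$ as the class of $z^{n-1}$ (with $w_j=0$ for $j\ne k$); using $z^{n-1}$ itself as representative shows
\begin{equation*}
   \|w_k\|_{E_k}^p \le \|z^{n-1}\|_{p,D(0,R)}^p = \frac{2\pi R^{(n-1)p+2}}{(n-1)p+2}=O(R^{np}/n).
\end{equation*}
An interpolating $f\in A^p(\ID)$ satisfies $f(z)-z^{n-1}=P_{\Z_k}(z)\,g(z)$ on $G_k$, with $P_{\Z_k}(z)=\prod_{a\in\Z_k}(z-a)$ monic of degree~$n$. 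In the concentrated case $\Z_k=\{0\}^n$ we have $P_{\Z_k}=z^n$, so the Taylor coefficient $c_{n-1}(f)$ at the origin equals~$1$; the standard Bergman Cauchy estimate $|c_{n-1}(f)|^p\le Cn^2\|f\|_p^p$ then gives $\|f\|_p\ge c\,n^{-2/p}$, whence $\|f\|_p/\|w_k\|_{E_k}\ge c\,R^{-n}/n^{1/p}\to\infty$, the desired contradiction.

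Reducing a general cluster to this concentrated case combines two tools. If some point of $\Z_k$ has multiplicity comparable to $n$, the hereditary property (Proposition~\ref{thm:hereditary}) allows passing to the sub-cluster consisting only of that point with its multiplicity, and a M\"obius map then places us directly in the concentrated case. If all multiplicities are bounded but $n\to\infty$, I instead apply a pigeonhole step: partition $D(0,R')$ into a bounded number $M$ of pseudohyperbolically small subdisks, choose one containing at least $n/M$ points of the cluster, and pass (again by Hereditary) to this sub-cluster; a M\"obius centering then makes the enclosing subdisk of pseudohyperbolic radius~$\rho$ arbitrarily small.

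The principal obstacle lies in this second, \emph{distinct-but-close} case: after concentration, the polynomial $P_{\Z_k}$ is no longer a pure $z^m$ but has lower-order coefficients given by elementary symmetric polynomials in points of a small subdisk. These coefficients are of size $O(\rho^l)$ in degree~$l$, hence small in principle, but the correction to $c_{n-1}(f)$ involves pairing them with Taylor coefficients of the quotient function $g$, which themselves must be controlled by boundary Cauchy estimates on $\partial G_k$ in terms of $\|f\|_p$ and $R^{n-1}$. The remaining work is to balance these estimates---choosing~$\rho$ and the effective domain appropriately---so that the correction to $c_{n-1}(f)$ stays below $\tfrac12$, recovering the concentrated-case argument and the claimed exponential blow-up $R^{-n}$.
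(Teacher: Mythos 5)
Your argument for the concentrated case $\Z_k=\{0\}^n$ is correct: with target the class of $z^{n-1}$, the interpolating $f$ must satisfy $c_{n-1}(f)=1$, the pointwise Cauchy estimate gives $\|f\|_p \gtrsim n^{-2/p}$, while $\|w_k\|_{E_k} \lesssim R^n n^{-1/p}$, so the ratio blows up exponentially. That part is sound.

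However, the reduction of a general cluster to this case is only sketched and, as you yourself acknowledge, left unfinished---and the unfinished part is the heart of the matter. After the pigeonhole you have $m\ge n/M(\rho)$ \emph{distinct} points in a disk of radius $\rho$, and you must control the correction $\sum_{j<m}P_j\,g_{m-1-j}$ with $g=(f-z^{m-1})/P_{\Z_k}$. Bounding $|g_l|$ via Cauchy on a circle $|z|=r$ brings in a factor of $1/(r-\rho)^m$: for $r$ fixed away from $1$ this is exponentially large in $m$; pushing $r\to 1$ makes the pointwise bound on $|f|$ blow up like $(1-r)^{-2/p}$. There is a delicate balance (roughly $r\approx 1-1/m$ and $\rho\lesssim n^{-1/3}$) that appears to close the argument, but you have not carried it out, and without it the estimate is not established. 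There is also the secondary bookkeeping that $M(\rho)$ grows as $\rho\to 0$, so the subcluster size $m$ shrinks relative to $n$, and one must check the final ratio still diverges. None of this is shown, so the proof as written has a genuine gap.

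The paper proves the theorem by a different and considerably cleaner route that sidesteps all polynomial coefficient estimates. For each offending $G_k$ it locates a nearby auxiliary point $a_k$---either a point of another cluster within pseudohyperbolic distance $1/2$, or the center of a pseudohyperbolic disk of radius $1/4$ disjoint from the scheme---and Möbius-shifts $a_k$ to the origin. Then, using Propositions~\ref{thm:hereditary}, \ref{thm:invariant} and \ref{thm:extendable}, it constructs an interpolating function $f_k$ with $\|f_k\|_p$ bounded independently of $k$, $f_k(0)=1$, and at least $B_k$ zeros in a fixed compact disk $D(0,R')$, $R'<1$. If $B_k\to\infty$, a normal-families/Hurwitz argument yields a contradiction (and, as the paper remarks, Jensen's formula gives a quantitative bound on $B$). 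You should compare your approach with this one: the paper's use of the Extendable proposition to inject a single auxiliary interpolation node at the origin replaces the entire concentrated/pigeonhole case split and avoids the Taylor-coefficient machinery.
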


\begin{proof}
Let $B_k$ be the cardinality of $\Z_k$ counting multiplicity and suppose
$B_k$ is not bounded.

For each $G_k$ either there exists another domain $G_k'$ with
$\psi(G_k,G_k') < 1/2$ or there is a pseudohyperbolic disk $D_k$ of
radius $1/4$ disjoint from all $G_j$ and satisfying $\psi(G_k,D_k) <
1/2$. In the first case let $a_k$ be any point in $\Z_k'$ (the cluster
contained in $G_k'$); in the second case let $a_k$ be the center of the
disk. Let $\phi_k$ be the conformal map which takes $a_k$ to the origin.
Observe that there is a radius $R' < 1$ depending only on the diameter
$R$ of $\I$, such that the number of elements $\phi_k(\Z)$ inside
$D(0,R')$ is unbounded.

For each $k$ we have an obvious scheme $\I_k$ associated with
$\phi_k(\Z) \union \{ 0 \}$: either $\phi_k(\I)$ or $\phi_k(\I)$
together with the cluster $\{ 0 \}$ and the domain $D(0,1/4)$. We can
define an element of $X^p_{\I_k}$ by choosing functions that are $1$ in
the domain containing the origin and zero in all other domains. The norm
of this element is at most $1$ and, by propositions
\ref{thm:hereditary}~through \ref{thm:extendable}, there exist a bound
depending only on $p$ and $K$ (independent of $k$) on the $A^p$ norm of
an interpolating function for this element. Thus we obtain a bounded
sequence $f_k$ in $A^p$ such that $f_k(0) = 1$ and the number of zeros
in the relatively compact disk $D(0,R')$ is unbounded. The former
condition prevents any subsequence of $f_k$ from converging to $0$
uniformly on compacta, while the latter condition implies that there
does exist such a subsequence. This contradiction proves $B_k$ is
bounded.
\end{proof}

In fact, given an upper bound on the $A^p$ norm of a function $f_k$ and
a lower bound on its value at 0 (namely $1$), Jensen's formula provides
an upper bound on the number of zeros of the function in $D(0,R')$.
Therefore the bound $B = \sup_k B_k$ actually depends only on $p$,
$K$ and $R$.

To see that a general interpolation scheme has bounded density,
let $\delta>0$ be less than half the distance between distinct clusters.
If $D(z,R)$ is some pseudohyperbolic disk, the disjointness of
$(\Z_k)_\delta$ provides an upper bound (depending only on $\delta$ and
$R$) on the number of clusters that meet $D(z,R)$. Since there is an
upper bound on the number of points in each cluster, we get an upper
bound on the number of points of $\Z$ in $D(z,R)$. Clearly this upper
bound (for fixed $R$) is also an interpolation invariant.

\section{Admissible interpolation schemes}\label{sec:admissible}

Let $\I$ be an interpolation scheme with domains $G_k$ and clusters
$\Z_k$. The following lists the two properties of $\I$ that we required
in the definition of an interpolation scheme, together with the
properties obtained in Theorems \ref{thm:lowerbound}~and
\ref{thm:boundeddensity} for general interpolation sequences.
\begin{itemize}
\item[(P1)] There is an $R < 1$ such that the pseudohyperbolic diameter
     of each $G_k$ is at most $R$.
\item[(P2)] There is an $\epsilon > 0$ such that $(\Z_k)_\epsilon \subset
    G_k$.
\item[(P3)] There is a $\delta > 0$ such that for all $j\ne k$ the
    pseudohyperbolic distance from $\Z_j$ to $\Z_k$ is at least
    $\delta$.
\item[(P4)] There is an upper bound $B$ on the number of points (counting
    multiplicity) in each cluster $\Z_k$
\end{itemize}

We will say that an interpolation scheme is \term{admissible} if it
satisfies, in addition to (P1)~and (P2), the conditions (P3)~and (P4).
We will call a sequence $\Z$ admissible if there is an admissible scheme
with $\Z = \Union_k \Z_k$. We will refer to $\delta$ as a
\term{separation constant of $\I$} and $B$ as a \term{cluster bound}

Verifying that a sequence is admissible would seem to require examining
a great many possible choices of interpolation schemes. However, the
following shows that the class of admissible sequences coincide with an
already very familiar class of sequences.

\begin{theorem}
A sequences is admissible if and only if it has bounded density.
\end{theorem}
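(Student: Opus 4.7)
The plan is, first, to observe that the forward direction is essentially contained in the remarks immediately preceding the theorem: if $\I$ is admissible with separation constant $\delta > 0$ and cluster bound $B$, then the $\delta/2$-neighborhoods $(\Z_k)_{\delta/2}$ are pairwise disjoint, so an elementary area estimate shows any disk $D(z, R)$ meets at most $N(R, \delta)$ of the clusters; since each contributes at most $B$ points, $|\Z \cap D(z, R)| \le B \cdot N(R, \delta)$.

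For the converse, assume $\Z$ has bounded density with $|\Z \cap D(z, R_0)| \le M$ for all $z$. I would choose a small parameter $\delta > 0$ (quantified below) and build a scheme by declaring two distinct points $z, z' \in \Z$ to be linked when $\psi(z, z') < \delta$. The clusters $\Z_k$ are then the equivalence classes under the transitive closure of this relation (each multiplicity of a repeated point going with its unique cluster), and I set $G_k = \bigcup_{z\in\Z_k} D(z, \delta/4)$. Properties (P2) and (P3) are immediate: $(\Z_k)_{\delta/4}\subset G_k$, and distinct $\Z_k, \Z_{k'}$ are pseudohyperbolically separated by at least $\delta$.

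The heart of the argument is establishing (P4), namely that each component of the $\delta$-graph has bounded cardinality. Bounded density applied to $D(v,\delta)\subset D(v,R_0)$ shows every vertex has at most $M-1$ neighbors in the $\delta$-graph. A spanning tree of a component with $n$ vertices then has maximum degree at most $M-1$ and hence graph radius at least $\log_{M-1} n - O(1)$; translating back via $\psi$, this forces the pseudohyperbolic (equivalently hyperbolic) diameter of the component to grow with $n$. On the other hand, since hyperbolic area grows exponentially, bounded density gives an upper bound of the form $n \le CMe^{\rho/\rho_0}$ for $n$ points contained in a hyperbolic ball of radius $\rho$. Comparing these two bounds yields an inequality of the form $n^{1 - c\delta/\log(M-1)} \le C$, which pins $n$ down to some $B = B(M, R_0)$ as soon as $\delta$ is small enough that the exponent is positive. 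With (P4) secured, the hyperbolic diameter of each $\Z_k$, and hence of $G_k$, is at most a fixed multiple of $B\delta$, giving pseudohyperbolic diameter bounded away from $1$ and hence (P1).

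The main obstacle is the cardinality estimate in the previous paragraph; it is where the bounded-degree combinatorics of the $\delta$-graph must be balanced against the exponential volume growth of the hyperbolic geometry of $\ID$. Once that is in place, verifying the remaining scheme conditions is routine.
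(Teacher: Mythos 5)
The forward direction and the idea of forming clusters as connected components of a $\delta$-linking graph (equivalently, of $(\Z)_{\delta/2}$, which is essentially what the paper uses) are sound, but the cardinality bound for the components has a genuine gap. From the degree bound you deduce that a spanning tree has graph radius \emph{at least} $\log_{M-1} n - O(1)$, and you then claim that this ``forces the hyperbolic diameter of the component to grow with $n$.'' The implication goes the wrong way: a chain of $r$ edges, each of pseudohyperbolic length less than $\delta$, gives an \emph{upper} bound on the hyperbolic diameter of the set (roughly $r$ times the hyperbolic step length), not a lower bound. What would actually feed into your claimed inequality $n^{1 - c\delta/\log(M-1)} \le C$ is an upper bound on the graph radius of the form $r \lesssim \log_{M-1} n$, and that is false: a path component with $n$ vertices has graph radius about $n/2$, not $\log n$. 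Concretely, let $\Z$ lie along a geodesic with constant pseudohyperbolic spacing $\eta$ and take $\eta < \delta < 2\eta$. Then the $\delta$-graph is one infinite path, so $n = \infty$, while $\Z$ has bounded density with $M$ comparable to $R_0/\eta$, the maximum degree is $2$, and your exponent $1 - c\delta/\log(M-1)$ is positive for small $\eta$. So positivity of the exponent does not pin down $n$. The threshold on $\delta$ that works is on the order of $r_0/M$, not $\log(M-1)/c$, and one only sees it by controlling the diameter of each component directly.

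That is what the paper does, and in the opposite order to your plan: it establishes the diameter bound (P1) first, and then (P4) is immediate from bounded density. For (P1) the paper uses a packing argument. Fix $z_0 \in \Z_k$ and draw $B+1$ concentric pseudohyperbolic circles $\Gamma_1, \dots, \Gamma_{B+1}$ around $z_0$, all inside $D(z_0, r_0)$, with consecutive radii spaced by more than the pseudohyperbolic diameter of $D(\cdot, \epsilon)$, so that any $D(z,\epsilon)$ can meet at most one circle. If the connected set $G_k = (\Z_k)_\epsilon$ reached outside $D(z_0, r_0)$ it would have to cross all $B+1$ circles, and each crossing is witnessed by a distinct point of $\Z_k$, producing $B + 1$ points of $\Z$ inside $D(z_0, r_0)$ and contradicting the choice of $B$. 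This is the step your combinatorics-versus-volume comparison would need to be replaced by.
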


\begin{proof}
We saw earlier that properties (P3)~and (P4) imply bounded density. For
the converse, assume $\Z$ has bounded density.

Pick any $0< r_0 < 1$ and let $B$ be a bound on the number of points in
$D(z,r_0)$, $z\in \ID$. We will show that there exists $\epsilon > 0$ such
that we can take $G_k$ to be the connected components of $(\Z)_\epsilon$
and $\Z_k = \Z\intersect G_k$. Property (P2) is obvious, and so is (P3)
with $\delta = 2\eps$.

If we are able to show (P1) for these connected components, then (P4)
will trivially follow since $\Z$ has bounded density. So we are reduced
to showing that for sufficiently small $\epsilon > 0$ a connected
component of $(\Z)_\epsilon$ has diameter at most $R < 1$. We do this by
showing that each $G_k$ is contained in some disk $D(z, r_0)$, and then
$R$ will be $2r_0/(1+r_0^2)$, the diameter of $D(z,r_0)$.

Let $G_k$ be some component of $(\Z)_\epsilon$. Clearly $G_k =
(\Z_k)_\epsilon$ for some subset $\Z_k$ of $\Z$. Select a point
$z_0\in\Z_k$ and consider circles $\Gamma_j$, each being the boundary of
$D(z_0, \eps_j)$ where $\epsilon_1 < \epsilon_2 <\dots<\epsilon_{B+1}$.
Choose these radii so that $\eps_1 = \eps$ and $\eps_{j+1} =
(\eps_j+\eps')/(1 + \eps_j\eps')$ where $\eps'$ is the pseudohyperbolic
diameter of $D(z,\eps)$. Note that a disk $D(z,\eps)$ cannot intersect
more than one of these circles.

Now select $\eps$ so small that $\eps_{B+1} \le r_0$ and assume, for the
purpose of contradiction, that $G_k$ is not contained in $D(z_0,r_0)$. In
that case, $G_k$ must intersect all of the $\Gamma_j$ and so there must
exist points $z_j$, $1 \le j \le B$ with the disk $D(z_j,\eps)$
intersecting $\Gamma_j$. This forces $B+1$ points ($z_0$ through $z_B$)
of $\Z$ to be inside of $D(z_0,r_0)$, where there are assumed to be only
$B$ points. This contradiction implies that $G_k$ is contained in
$D(z_0,r_0)$.
\end{proof}

Given a sequence $\Z$ with bounded density, there are essentially two
extreme choices of domains $G_k$ satisfying properties (P1)~through
(P4). A ``minimal one'' is to let $G_k$ be the connected components of
$(\Z)_\epsilon$ for sufficiently small $\eps$; a ``maximal one'' is to
let $G_k = D(z_k, r)$ where $D(z_k, r)$ is a disk containing the $k$th
connected component of $(\Z)_\epsilon$. These differ in the norm imposed
on the finite dimensional spaces $E_k$. One suspects that these norms
might impose equivalent norms on $X^p_\I$ (but we do not have a proof).
We will however show that whether or not $\Z$ is a general
interpolating sequence for $A^p$ is independent of the choice. If $\Z$
is a general interpolating sequences then the norms $\| \cdot
\|_{X^p_\I}$) are equivalent for all admissible $\I$, for in that case they
are all equivalent to the quotient space norm in $A^p/\N(\Z)$ where
$\N(\Z)$ is the subspace of functions vanishing on $\Z$.

\section{General interpolating sequences, density and the
\texorpdfstring{$\dbar$}{dbar}-problem}\label{sec:density}

The following is one of the main steps in nearly all characterizations
of interpolating sequences: some key property is preserved under small
perturbations of the sequence. In the present context we have the mild
additional complication that a perturbation of a sequence also perturbs
the interpolation scheme chosen for it.

\begin{proposition}[Stability]\label{thm:stability}
  Let $\I$ be an admissible interpolation scheme with domains $G_k$,
  clusters $\Z_k$ and diameter $R$. Assume $\Z = \Union_k \Z_k$ is a
  general interpolation sequence for $A^p$ with interpolation
  constant $K$. For each $k$ let $\phi_k$ be defined by $\phi(z) = r_k
  z$ and let $\J$ be the interpolation scheme with domains $\phi_k(G_k)$
  and clusters $\W_k = \phi_k(\Z_k)$. Let $\W =
  \Union_k \W_k$.

  There exists an $\eta > 0$ depending only on interpolation invariants
  such that if $\psi(\phi_k(z),z) < \eta$ for all $z\in G_k$ and for all
  $k$, then $\W$ is a general interpolation sequence for $A^p$
  relative to $\J$. Its interpolation constant can be estimated in terms
  of $\eta$ and interpolation invariants of $\I$.
\end{proposition}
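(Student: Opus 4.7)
The plan is a Neumann-series stability argument: realize $\J$-interpolation as a perturbation of $\I$-interpolation by an operator of norm $O(\eta)$, then invert by a geometric series.

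For each $k$, the Jacobian-weighted composition
\[
  U_k g(w) = g(\phi_k^{-1}(w))\bigl((\phi_k^{-1})'(w)\bigr)^{2/p}
\]
is, by the argument of Proposition~\ref{thm:invariant}, an isometry of $A^p(G_k)$ onto $A^p(\phi_k(G_k))$, and it carries $\N(\Z_k,G_k)$ onto $\N(\W_k,\phi_k(G_k))$ because $\W_k = \phi_k(\Z_k)$. Hence $U_k$ descends to an isometry $\bar U_k\colon E_k \to E_k'$ between the target spaces of $\I$ and $\J$, and the $\ell^p$ direct sum is an isometry $\bar U\colon X^p_\I \to X^p_\J$. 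Define a bounded lift $S\colon X^p_\J \to A^p$ by setting $u = \bar U^{-1}v$ and, using the $\I$-interpolation hypothesis, choosing $Sv\in A^p$ with $\I$-data $u$ and $\|Sv\|_p \le K\|v\|_{X^p_\J}$.

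Let $\Phi_\J\colon A^p \to X^p_\J$ be the $\J$-interpolation operator and put $L = I - \Phi_\J\circ S$; the goal is $\|L\| = O(\eta)$. Since $v_k = \bar U_k u_k = [U_k(Sv|_{G_k})]$ in $E_k'$ while $(\Phi_\J Sv)_k = [Sv|_{\phi_k(G_k)}]$, the quotient-norm inequality gives, writing $F = Sv$,
\[
  \|Lv\|_{X^p_\J}^p \le \sum_k \bigl\|F - U_k(F|_{G_k})\bigr\|_{p,\phi_k(G_k)}^p.
\]
At $w\in\phi_k(G_k)$ the integrand is $|F(w) - F(\phi_k^{-1}(w))((\phi_k^{-1})'(w))^{2/p}|$. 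The hypothesis $\psi(\phi_k^{-1}(w),w)<\eta$ together with a Schwarz--Pick estimate $(\phi_k^{-1})'(w)^{2/p} = 1 + O(\eta)$ and the Bergman derivative bound $|F'(z)|^p(1-|z|^2)^p \le C_r|D(z,r)|^{-1}\int_{D(z,r)}|F|^p\,dA$ (as used in the proof of Theorem~\ref{thm:lowerbound}) produces the pointwise estimate
\[
  \bigl|F(w) - F(\phi_k^{-1}(w))((\phi_k^{-1})'(w))^{2/p}\bigr|^p \le C\eta^p\,|\widetilde G_k|^{-1}\!\int_{\widetilde G_k}|F|^p\,dA
\]
for a uniform pseudohyperbolic enlargement $\widetilde G_k$ of $G_k$.

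Integrating in $w$ over $\phi_k(G_k)$, whose area is comparable to $|\widetilde G_k|$, and summing over $k$, the bounded-overlap property of the $\widetilde G_k$ (inherited from admissibility of $\I$ via Theorem~\ref{thm:boundeddensity} provided the enlargement is fixed and sufficiently small) yields $\|Lv\|_{X^p_\J} \le C'K\eta\|v\|_{X^p_\J}$. Choosing $\eta$ small enough that $C'K\eta \le 1/2$, the operator $\Phi_\J\circ S = I - L$ is invertible on $X^p_\J$ by Neumann series, and for any data $w\in X^p_\J$ the function $S((I-L)^{-1}w)$ interpolates $w$ with $A^p$-norm at most $2K\|w\|_{X^p_\J}$. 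The case $0<p<1$ uses the $p$-convexified version of the geometric series estimate and is otherwise identical. The main obstacle is the uniform pointwise Lipschitz-type bound above: its constants must depend only on invariants of $\I$, which forces $\eta$ to be small enough that the enlargements $\widetilde G_k$ remain subdomains of $\ID$ with Bergman mean-value constants controlled by $R$ alone.
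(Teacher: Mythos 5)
Your proposal is correct and takes essentially the same route as the paper: transfer the $\J$-data back to $\I$-data, solve the $\I$-problem, and estimate the resulting $\J$-defect as $O(\eta)$, then invert by iteration/Neumann series. The only cosmetic difference is that you include the Jacobian factor $((\phi_k^{-1})')^{2/p}$ to make the transfer operator an exact isometry, whereas the paper deliberately omits it (accepting a mere isomorphism with constant $C$) and so compares $f(z/r_k)$ directly with $f(z)$; your version introduces the extra term $F(w/r_k)(r_k^{-2/p}-1)$, which you correctly note is also $O(\eta)$ and poses no obstruction. Both arguments also rest on the same two ingredients: the local mean-value bound on $|F'|$ to get the pointwise $O(\eta)$ estimate, and the bounded overlap of the enlarged domains coming from admissibility. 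One small omission in your write-up is an explicit check that $\J$ is admissible (or at least a valid interpolation scheme) when $\eta$ is small — in the paper this is disposed of at the outset by taking $\eta<\delta/4$ so the clusters stay $\delta/2$-separated — but this is a one-line remark and the rest of your argument is sound.
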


\begin{proof}
It is clear that if we choose $\eta$ small enough, then $\J$ is
admissible since all points of $G_k$ are moved a pseudohyperbolic
distance at most $\eta$. We need only take $\eta < \delta/4$, where
$\delta$ is a separation constant of $\I$, and then $\delta/2$ is a
separation constant for $\J$.

As usual, let $E_k = A^p(G_k))/\N(\Z_k,G_k)$. Let $D_k = \phi_k(G_k)$
and let $F_k = A^p(D_k))/\N(\W_k,D_k)$. Recall $\rho(G_k) = \sup
\{ 1 - |z| \st z\in G_k \}$. The hypotheses imply that $1 - r_k$ are
small multiples of $\rho(G_k)$.

Let $v = (v_k)$ be any sequence in $X^p_\J$. We can transfer this, via
$\phi_k$ to a sequence $u = (u_k)$ in $X^p_\I$: let $u_k$ be the element
of $E_k$ represented by $f_k\circ\phi_k$ where $f_k$ represents $v_k \in
F_k$. It will be convenient to omit the factor $(\phi_k')^{2/p} =
r_k^{2/p}$, so this is not an isometry. However, it is still an
isomorphism from $X^p_\J$ onto $X^p_\I$ and there is a constant $C$ with
$\| u \|_{X^p_\I} \le C\| v \|_{X^p_\J}$.

For $f \in A^p$, let $\Phi(f)$ denote the element of $X^p_\J$
represented by the sequence $(f|_{D_k})$. It suffices to obtain a
function $f_v \in A^p$ such that
\begin{align}
  \| v - \Phi(f_v)\|_{X^p_\J} \le \frac{\| v \|_{X^p_\J}}{2} \label{eq:closeness}\\
\intertext{and}
  \| f_v \|_p \le M\| v \|_{X^p_\J}\label{eq:norm}
\end{align}
for some finite $M$. For if this can be done, iterating the process as
in \cite{Lue85a} produces a sequence $v^{(j)}$ in $X^p_\J$ and a
corresponding sequence $f_{v^{(k)}}$ in $A^p$ satisfying $\| f_{v^{(j)}}
\|_p < M\| v \|_{X^p_\J}/2^j$ and $\| v - \sum_{j=1}^n \Phi(f_{v^{(j)}})
\|_{X^p_\J} < \| v \|_{X^p_\J}/2^n$. Thus, with $f = \sum_j
f_{v^{(j)}}$, we have $\Phi(f)=v$.

We take $f = f_v$ to be the function that interpolates $(u_k)$ with $\|
f \|_p \le K\| u \| \le CK\| v \|_{X^p_\J}$. Then $(g_k) =
(f\circ\phi_k^{-1})$ represents $v_k$ and it suffices to estimate the
difference $\| g_k - f|_{D_k} \|_{p,D_k}$:
\begin{equation}\label{eq:A1}
    \| g_k - f|_{D_k} \|^p_{p,D_k}
        = \int_{D_k} |f(z/r_k) - f(z)|^p \,dA(z).
\end{equation}
If $\eta < 1/4$, we have (see, for example, \cite{Lue85a})
\begin{equation*}
    |f(z/r_k) - f(z)|^p \le \frac{C_p\eta}{|D(z, 1/2)|}
    \int_{D(z,1/2)} |f|^p \,dA,
\end{equation*}
Integrating this gives
\begin{equation}\label{eq:A2}
    \int_{D_k} |f(z/r_k) - f(z)|^p \,dA(z) \le C_p\eta \int_{D'_k}
     |f|^p \,dA,
\end{equation}
where $D'_k = (D_k)_{1/2}$ is the domain $D_k$ expanded by
pseudohyperbolic distance $1/2$. There is a number $B'$ (depending on
the diameter and separation constants of $\I$) such that at most $B'$ of
the domains $D'_k$ overlap at any point. Combining
equation~\eqref{eq:A1} and inequality~\eqref{eq:A2} and summing, we have
\begin{align*}
    \| v - \Phi(f) \|^p_{X^p_\W}
        &\le \sum_k \| g_k - f|_{D_k} \|^p_{p,D_k} \\
        &\le C_p\eta\sum_k \int_{D'_k} |f|^p \,dA  \\
        &\le C_p\eta B'\| f \|_p \le C_p\eta B'CK\| v \|_{X^p_\W}.
\end{align*}
If we take $\eta < 1/(2C_pB'CK)$ we have \eqref{eq:closeness} and
\eqref{eq:norm} with $M = CK$, as required.
\end{proof}

The definition of \term{upper uniform density} can be found in
\cite{Sei93b}. Here is an equivalent definition. For a sequence $\Z$ in
the unit disk, let
\begin{equation*}
  D(\Z, r) = \left( \frac{1}{2}\sum_{|z_k| < r} (1  - |z_k|^2)
  \right) \biggm/ \left( \log \frac{1}{1 - r^2} \right).
\end{equation*}
and let
\begin{equation*}
    D^+(\Z) = \limsup_{r\to 1} \left[ \sup_\phi D(\phi(\Z),r) \right]
\end{equation*}
where the supremum is taken over all conformal self-maps of $\ID$. The
quantity $D^+(\Z)$ is called the \term{upper uniform density} of $\Z$.

Because of its connection with the $\dbar$-problem posed in the
introduction, we will in fact use an equivalent version of $D^+(\Z)$.
Recall that
\begin{equation*}
    k_\Z(\zeta) = \frac{|\zeta|^2}{2} \sum_{k = 1}^\infty \frac{(1 -
    |z_k|^2)^2}{|1 - \bar z_k\zeta|^2}.
\end{equation*}
It is easy to compute the average of $k_\Z(\zeta)$ over the circle
$|\zeta| = r$. It is
\begin{equation*}
    \hat k_\Z(r) = \frac{1}{2\pi}\int_0^{2\pi} k_\Z(re^{it}) \,dt =
    \frac{r^2}{2} \sum_{k=1}^\infty \frac{(1 - |z_k|^2)^2}{1 -
    |z_k|^2r^2}
\end{equation*}

The following was shown in \cite{Lue04a}. There it was assumed that $\Z$
was uniformly discrete, but the proof only made use the fact that it had
bounded density.

\begin{Athm}
  For a sequence $\Z$ without limit points in $\ID$
  \begin{equation*}
    D^+(\Z) = S^+(\Z) \defeq \limsup_{r \to 1} \left[ \sup_\phi \left( \hat
        k_{\phi(\Z)}(r) \right)\biggm/ \left(\log\frac{1}{1 -
        r^2}\right) \right].
  \end{equation*}
\end{Athm}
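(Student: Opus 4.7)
My plan is to prove the pointwise estimate
$$|\hat k_\Z(r) - N(\Z,r)| \le C$$
for all $r\in(0,1)$, where $N(\Z,r) = \frac{1}{2}\sum_{|z_k|<r}(1-|z_k|^2)$ is the numerator of $D(\Z,r)$ and $C$ depends only on a density bound $M$ for $\Z$. Since Möbius self-maps of $\ID$ preserve pseudohyperbolic disks, the same $C$ controls $|\hat k_{\phi(\Z)}(r) - N(\phi(\Z),r)|$ for every $\phi$. Dividing by $\log(1/(1-r^2))\to\infty$ then collapses this $O(1)$ error in the limsup and yields $D^+(\Z)=S^+(\Z)$.

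The algebraic starting point is the identity
$$1-\frac{r^2(1-|z_k|^2)}{1-|z_k|^2 r^2} = \frac{1-r^2}{1-|z_k|^2 r^2},$$
which rearranges to
$$N(\Z,r) - \hat k_\Z(r) = \frac{1-r^2}{2}\sum_{|z_k|<r}\frac{1-|z_k|^2}{1-|z_k|^2 r^2} \;-\; \frac{r^2}{2}\sum_{|z_k|\ge r}\frac{(1-|z_k|^2)^2}{1-|z_k|^2 r^2}.$$
For the first sum, $(1-|z_k|^2)/(1-|z_k|^2 r^2)\le 1$ bounds it by $(1-r^2)n_\Z(r)$, where $n_\Z(r) = \#\{k:|z_k|<r\}$; since $\{|z|<r\}$ has hyperbolic area $\asymp 1/(1-r^2)$ and pseudohyperbolic disks of fixed radius have bounded hyperbolic area, bounded density gives $n_\Z(r) = O(1/(1-r^2))$, so this term is $O(1)$.

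The second sum is where bounded density is used most delicately, and is the main technical hurdle. AM-GM applied to $1-|z_k|^2 r^2 = (1-r^2) + r^2(1-|z_k|^2)$ gives $1-|z_k|^2 r^2 \ge 2r\sqrt{(1-r^2)(1-|z_k|^2)}$, so the sum is bounded by $(2\sqrt{1-r^2})^{-1}\sum_{|z_k|\ge r}(1-|z_k|^2)^{3/2}$. I would then estimate the tail via dyadic annuli $A_j = \{1-2^{-j}\le |z|<1-2^{-j-1}\}$: each $A_j$ has hyperbolic area $\asymp 2^j$, so $\#(\Z\cap A_j) \lesssim 2^j$, and with $(1-|z|^2)^{3/2}\asymp 2^{-3j/2}$ on $A_j$ the per-annulus contribution is $\lesssim 2^{-j/2}$. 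Summing the geometric series over $j\ge j_0$ with $2^{-j_0}\asymp 1-r$ gives $O(\sqrt{1-r})$, and dividing by $\sqrt{1-r^2}$ returns an $O(1)$ bound.

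Together these estimates yield $|N(\Z,r)-\hat k_\Z(r)|\le C$ with $C$ depending only on $M$. The conceptual linchpin for the final step is that $\phi(\Z)$ has the same density constant as $\Z$ (Möbius invariance of the pseudohyperbolic metric), so taking $\sup_\phi$ respects the $O(1)$ bound; then $\limsup_{r\to 1}$ converts the pointwise estimate into the desired equality $D^+(\Z)=S^+(\Z)$.
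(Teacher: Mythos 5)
Your proof is correct, and it supplies an argument the paper itself omits: Theorem~A is stated here with a citation to \cite{Lue04a} and no internal proof, so there is nothing in the present paper to compare against. Your strategy of establishing the pointwise bound
\begin{equation*}
  \left|\hat k_\Z(r) - \tfrac12\sum_{|z_k|<r}(1-|z_k|^2)\right| \le C
\end{equation*}
with $C$ depending only on a density bound is the right one, and the details check out. The algebraic identity is correct, the first sum is $O(1)$ because $\#\{k : |z_k|<r\}=O(1/(1-r^2))$ under bounded density (the set $\{|z|<r\}=D(0,r)$ has invariant area $\pi r^2/(1-r^2)$ and can be covered by $O(1/(1-r^2))$ pseudohyperbolic disks of fixed radius, each holding at most $M$ points), and the AM--GM plus dyadic-annuli estimate of the tail yields $O(\sqrt{1-r})/\sqrt{1-r^2}=O(1)$. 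The M\"obius-invariance step is also sound: $\psi$ is preserved by conformal self-maps of $\ID$, so $\phi(\Z)$ has the same density constant as $\Z$ and hence the same $C$; passing through $\sup_\phi$ then costs nothing, and dividing by $\log(1/(1-r^2))\to\infty$ kills the $O(1)$ error in the $\limsup$.

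One small point to make explicit: as written, your constant $C$ depends on the density bound $M$, so the argument only applies when $\Z$ has bounded density. You should close the loop by observing (as the paper itself does in the sentence following the theorem statement) that if $\Z$ fails to have bounded density then both $D^+(\Z)$ and $S^+(\Z)$ are infinite --- conjugating by a M\"obius map that sends a heavily populated pseudohyperbolic disk to a neighborhood of the origin forces both the partial sum defining $D(\phi(\Z),r)$ and $\hat k_{\phi(\Z)}(r)$ to be arbitrarily large --- so the equality holds trivially in that case. With that remark added, your proof is complete for every sequence without limit points in $\ID$.
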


Note that the finiteness of either $D^+(\Z)$ or $S^+(\Z)$ is equivalent
to $\Z$ having bounded density, so that does not need to be part of the
hypotheses.

In order to deal with the case $p < 1$ we define a space $L^p_{q,\Z}$
that will (as in \cite{Lue04a}) serve as a replacement for $L^p_\Z$.
Choose some radius $r < 1$, then a measurable function $f$ belongs to
$L^p_q$ if and only if the function $m_q(f,z)$ belongs to $L^p$, where
\begin{equation*}
    m_q(f,z)^q = \frac{1}{|D(z,r)|} \int_{D(z,r)} |f|^q \,dA.
\end{equation*}
The radius $r$ should be fixed, but the actual value chosen is not
important. Define $L^p_{q,\Z}$ to be the set of all $f$ such that
$fe^{k_\Z}$ belongs to $L^p_q$. We will always assume $q \ge 1$. Note
that $L^p_p = L^p$ and $L^p_{p,\Z} = L^p_\Z$. Also, the set of analytic
functions in $L^p_q$ (resp., $L^p_{q,\Z}$) is independent of $q$, being
$A^p$ (resp., $A^p_\Z$). We will allow $q = \infty$ with $m_\infty(f,z) =
\sup_{w\in D(z,r)} |f(w)|$.

Finally, we have the following equivalent conditions for general
interpolation.

\begin{theorem}\label{thm:realmain}
  Let $\Z$ be a sequence in $\ID$ without limit points. The following
  are equivalent:
  \begin{enumerate}
    \item $\Z$ has bounded density and is a general interpolating
          sequence for $A^p$ relative to any admissible interpolation
          scheme.\label{GIS1}
    \item $\Z$ is a general interpolation sequence for $A^p$
          relative to some interpolation scheme.\label{GIS2}
    \item $D^+(\Z) < 1/p$ or equivalently $S^+(\Z) < 1/p$.\label{UUD}
    \item $\Z$ has bounded density and the $\dbar$-problem has solutions
        with bounds in $L^p_{q,\Z}$ for all $q \ge 1$.\label{DBAR1}
    \item $\Z$ has bounded density and the $\dbar$-problem has solutions
        with bounds in $L^p_{q,\Z}$ for some $q \ge 1$.\label{DBAR2}
  \end{enumerate}
\end{theorem}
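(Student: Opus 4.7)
The plan is to close the cycle $(1)\Rightarrow(2)\Rightarrow(3)\Rightarrow(4)\Rightarrow(5)\Rightarrow(1)$; the equivalence of $D^+(\Z)<1/p$ and $S^+(\Z)<1/p$ is Theorem~A, so it suffices to work with $S^+$. The implications $(1)\Rightarrow(2)$ and $(4)\Rightarrow(5)$ are immediate: the former because the theorem of Section~\ref{sec:admissible} exhibits an admissible scheme for any bounded density sequence, and the latter by specialization to one value of $q$.

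For $(2)\Rightarrow(3)$, Theorem~\ref{thm:boundeddensity} gives bounded density, and combined with Theorem~\ref{thm:lowerbound} one may upgrade the given scheme to an admissible one. The density bound $S^+(\Z)<1/p$ is then obtained by contradiction: if $S^+(\Z)\ge 1/p$, use Propositions~\ref{thm:invariant} and~\ref{thm:stability} to pass to a dilated interpolating sequence with controlled interpolation constant yet strictly greater weighted mass on disks $D(0,r_n)$, $r_n\to1$. The interpolation constant produces a sequence of functions in $A^p$ normalized at $0$ with many prescribed zeros in $D(0,r_n)$; a Jensen-type zero count then forces $S^+(\Z)<1/p$, giving the contradiction. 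This follows the template of \cite{Lue04a}.

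The main obstacle is $(3)\Rightarrow(4)$: construct a bounded solution operator for the weighted $\dbar$-problem. Choose $\alpha$ with $pS^+(\Z)<\alpha<1$; then for $r$ near $1$ and all conformal self-maps $\phi$,
\begin{equation*}
  p\hat k_{\phi(\Z)}(r)\le\alpha\log\frac{1}{1-r^2}.
\end{equation*}
This translates, by integration on circles, to a pointwise-averaged bound showing that $e^{pk_\Z}$ grows near $\partial\ID$ strictly more slowly than $(1-|z|^2)^{-1}$ on average. I would then represent solutions of $(1-|z|^2)\dbar u=f$ by an explicit M\"obius-invariant Cauchy-type kernel $K(\zeta,z)$ that absorbs the factor $1/(1-|\zeta|^2)$, and verify the mapping property $\|u\|_{L^p_{q,\Z}}\le C\|f\|_{L^p_{q,\Z}}$ by a Schur test applied to $K(\zeta,z)\,e^{k_\Z(\zeta)-k_\Z(z)}$, whose integrability uses precisely $\alpha<1$. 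For $p\ge1$ the Schur test is direct; for $p<1$ one works with the $m_q$-averages, reducing to an $L^p$-bound for a standard positive integral operator. The novelty over \cite{Lue04a} is that this exploits explicit properties of $e^{pk_\Z}$ (specifically the formula for $\hat k_\Z$), avoiding Ortega-Cerd\`a's general $L^2$ machinery.

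For $(5)\Rightarrow(1)$, fix an admissible scheme $\I$ with inner radius $\eps$ and data $(w_k)\in X^p_\I$ with minimizing representatives $f_k\in A^p(G_k)$. Pick a smooth partition of unity $\chi_k$ with $\chi_k\equiv1$ on $(\Z_k)_{\eps/2}$, $\mathop{\mathrm{supp}}\chi_k\subset G_k$, and $(1-|z|^2)|\dbar\chi_k|$ uniformly bounded, and set $F=\sum_k\chi_k f_k$. The support of $(1-|z|^2)\dbar F=\sum_k(1-|z|^2)f_k\dbar\chi_k$ lies at pseudohyperbolic distance at least $\eps/2$ from $\Z$, where separation plus bounded density make $e^{pk_\Z}$ uniformly bounded; thus $(1-|z|^2)\dbar F\in L^p_{q,\Z}$ with norm $\le C\|w\|_{X^p_\I}$. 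Invoke hypothesis~(5) to obtain $u\in L^p_{q,\Z}$ with $(1-|z|^2)\dbar u=(1-|z|^2)\dbar F$. Then $g=F-u$ is holomorphic on $\ID$ and lies in $A^p$ (since $F\in L^p$ by bounded overlap of the supports, and $u\in L^p_{q,\Z}\hookrightarrow L^p$). On $(\Z_k)_{\eps/2}$ one has $F=f_k$ and $\dbar u=0$, so $g-f_k=-u$ is a holomorphic element of $L^p_{q,\Z}$ on that neighborhood; the argument of \cite{Lue04a} shows that such an element necessarily lies in $\N(\Z_k,G_k)$, so $g$ interpolates $(w_k)$ with the bound $\|g\|_p\le C\|w\|_{X^p_\I}$.
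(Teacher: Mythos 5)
Your outline of the cycle $(1)\Rightarrow(2)\Rightarrow(3)\Rightarrow(4)\Rightarrow(5)\Rightarrow(1)$ matches the paper's organization, and the two trivial implications together with the broad strokes of $(2)\Rightarrow(3)$ are on track. But there are two genuine gaps, one of which is a hard error.

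In $(5)\Rightarrow(1)$ your correction $g = F - u$ does not interpolate. The weight $e^{pk_\Z}$ is \emph{finite and smooth at the points of} $\Z$ (indeed $k_a(a)=|a|^2/2$), so membership in $L^p_{q,\Z}$ places no vanishing constraint at the points of $\Z$. Your $u$, being merely a solution of $\dbar u = \dbar F$ lying in $L^p_{q,\Z}$, is holomorphic near each cluster but may take arbitrary nonzero values on $\Z_k$; nothing forces $g - f_k = -u$ into $\N(\Z_k,G_k)$, and the cited ``argument of \cite{Lue04a}'' does not say that holomorphic elements of $L^p_{q,\Z}$ vanish on $\Z$. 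The paper instead solves the modified equation $\dbar u = \dbar g / \Psi_\Z$, where $\Psi_\Z$ is the smooth function from \cite{Lue04a} whose zero sequence is exactly $\Z$, and puts $f = g - u\Psi_\Z$. The factor $\Psi_\Z$ is precisely what transfers vanishing on $\Z$ from the data to the correction, and the weight $e^{pk_\Z}$ is tailored so that $(1-|z|^2)\dbar g / \Psi_\Z \in L^p_{q,\Z}$ and $u\Psi_\Z \in L^p_q$. Dropping $\Psi_\Z$ breaks the argument at the final step.

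In $(3)\Rightarrow(4)$ you propose a bare Schur test on a Cauchy-type kernel times $e^{k_\Z(\zeta)-k_\Z(z)}$, asserting integrability ``uses precisely $\alpha<1$.'' The density hypothesis controls only the \emph{conformally-averaged} quantity $\hat k_{\phi(\Z)}(r)$, not the pointwise oscillation of $k_\Z$, so there is no immediate pointwise kernel bound to feed into a Schur test. The paper's actual new content here is Lemma~\ref{thm:harmonicbound} combined with the radial convolution smoothing $\tau\mapsto\tau^*$: these upgrade the averaged density bound into a harmonic majorant of $\log\frac{1}{1-|\zeta|^2} - \frac{p}{\beta}k_\Z(\zeta)$, hence into a nonvanishing analytic $h^*$ satisfying the growth estimate \eqref{eq:growth}; the family $g_a$ built from $h^*$ is what makes the $\dbar$-solution operator bounded on $L^p_{q,\Z}$. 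Without producing $h^*$ (or an equivalent family of weights adapted to $e^{k_\Z}$) the Schur-test claim is unsupported, and I would not expect it to close as stated.
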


It is trivial that \ref{GIS1} implies \ref{GIS2}  and \ref{DBAR1}
implies \ref{DBAR2}. In this section we will prove that \ref{GIS2}
implies \ref{UUD}. In the next section we will show \ref{UUD} implies
\ref{DBAR1} and \ref{DBAR2} implies \ref{GIS1}.

\begin{proof}[Proof of \textup{\ref{GIS2} $\Rightarrow$ \ref{UUD}}]
Assume \ref{GIS2} and let $\I$ be the interpolation scheme with domains
$G_k$, clusters $\Z_k$ and diameter $R$. We have seen that $\I$ is
necessarily admissible and $\Z$ necessarily has bounded density. Let $K$
be the interpolation constant for $\Z$. The proof begins almost exactly
as in \cite{Lue04a}. We wish to show the existence of a function $g$ in
the unit ball of $A^{p/\beta}$, for some $\beta < 1$, that vanishes on
``most'' of $\Z$ and satisfies $|g(0)| > \delta$, where $\delta>0$
is an interpolation invariant.

Start by removing from $\Z$ those clusters $\Z_k$ for which $G_k$
intersects $D(0,1/2)$. The number of points removed is bounded by an
amount which is an interpolation invariant. Then append the single point
$\{ 0 \}$ to the result. Call this new sequence $\W$. By Propositions
\ref{thm:hereditary}~and \ref{thm:extendable}, $\W$ is a general
interpolating sequence relative to a related interpolation scheme. It
has an interpolation constant which may be estimated solely in terms of
interpolation invariants of $\Z$.

Now perturb the sequence $\W$ as in Proposition~\ref{thm:stability},
with the affine maps $\phi_k(z) = r_k z$, where each $0 < r_k < 1$ and
$1 - r_k$ is a fixed small multiple $\eta$ of $\rho(G_k)$. If $\eta$ is
sufficiantly small, depending only on interpolation invariants, the
result is a general interpolating sequence. Call the new sequence
$\W'$. If $a$ is a point of $\W$ the perturbation $a'$ in $\W'$ will
satisfy $(1 - |a|^2) < \beta (1 - |a'|^2)$ for some $\beta < 1$
depending only on $\eta$ and $R$. Since this new sequence is a zero set
for $A^p$ then, according to \cite{Lue00b} and \cite{Lue04a}, perturbing
it back to the original $\W$ produces a sequence that is a zero sequence
for $A^{p/\beta}$. (Strictly speaking, Theorem~4 of \cite{Lue00b}
assumed $1 - |a|^2 = \beta (1 - |a'|^2)$ for all $a$, but only the
inequality is actually needed in the proof.)

Moreover, since $\W'$ is interpolating, there is a function $f$ in $A^p$
with norm $1$ whose zero sequence is $\W'\setminus\{ 0 \}$ and satisfies
$|f(0)| > \delta$ where $\delta > 0$ depends only on interpolation
invariants for $\I$. As in \cite{Lue04a}, there can be constructed a
function $g$ in $A^{p/\beta}$ whose zero sequence is $\W\setminus\{ 0
\}$ and which satisfies $\| g \|_{p/\beta} \le C\| f \|_{p}$ and $|g(0)|
> |f(0)|/C$, where $C$ depends only on interpolation invariants.

Also as in \cite{Lue04a}, we can produce a function $h \in
A^{p/\beta}_\Z$ ($\Z$ the original sequence, which differs from $\W$ by
a number of elements that depends only on invariants of $\I$) where $h$
vanishes nowhere and satisfies $\| h \|_{p/\beta,\Z} = 1$ and $|h(0)| >
\delta$ with some (new) value of $\delta > 0$ depending only on
invariants of $\I$. Let $h^*$ be that nonvanishing function in the unit
ball of $A^{p/\beta}_\Z$ which maximizes the absolute value at the
origin. As shown in \cite{Lue04a}, it will satisfy
\begin{equation}\label{eq:growth}
    |h^*(\zeta)|^{p/\beta} e^{(p/\beta)k_\Z(\zeta)} (1 - |\zeta|^2) \le C
\end{equation}
with an absolute constant $C$.

Now consider
\begin{multline*}
  \frac{1}{2\pi}\int_0^{2\pi} \frac{p}{\beta} \log |h^*(re^{it})| +
        \frac{p}{\beta}k_\Z(re^{it}) + \log (1 - |r|^2) \,dt \\
  \begin{aligned}
    &\le \log \left( \frac{1}{2\pi} \int_0^{2\pi} |h^*(re^{it})|^{p/\beta}
        e^{(p/\beta)k_\Z(re^{it})}(1 - |r|^2) \,dt \right) \\
    &\le \log C,
  \end{aligned}
\end{multline*}
valid for all $0 < r < 1$. Divide the above by $\log[1/(1-r^2)]$ and
$p/\beta$ to obtain:
\begin{equation}\label{eq:Zestimate}
  \left( \hat
        k_{\Z}(r) \right)\biggm/ \left(\log\frac{1}{1 - r^2}\right)
        \le \frac{\beta}{p}\left(1 + \frac{\log C -
        \log|h^*(0)|}{\log\frac{1}{1 - r^2}}\right).
\end{equation}
Fix a $\kappa> 0$ so that $\beta(1 + \kappa) < 1$ and choose
a sufficiently large $r_* < 1$ so that large parenthesis on the right side
of the above inequality is less than $1 + \kappa$. Since $-\log|h^*(0)|
\le \log(1/\delta)$, the choice of $r_*$ depends only on interpolation
invariants. Applying this to \eqref{eq:Zestimate} with $\phi(\Z)$ in
place of $\Z$ gives
\begin{equation*}
  \left( \hat
        k_{\phi(\Z)}(r) \right)\biggm/ \left(\log\frac{1}{1 - r^2}\right)
        \le \frac{\beta}{p}(1 + \kappa), \quad \text{all }r > r_*,
\end{equation*}
for all conformal maps $\phi$ of the unit disk. Taking the supremum over
all such $\phi$ gives us $S^+(\Z) < 1/p$, as required for
condition~\ref{UUD}.
\end{proof}

\section{The \texorpdfstring{$\dbar$}{dbar}-problem and general interpolating
sequences}\label{sec:interpolating}

The existance of $h^*$ satisfying inequality~\eqref{eq:growth} is
sufficient (when conformal invariance is invoked) to prove
condition~\ref{DBAR1} of Theorem~\ref{thm:realmain}. So now we want to
prove that condition~\ref{UUD} implies the existence of such a function.

We \emph{could} argue as in the last section of \cite{Lue04a} that this
density condition implies the hypotheses of a theorem of
J.~Ortega-Cerd\`a (\cite{Ort02}) which implies the weighted
$\dbar$-estimates. That, however, turns out to be a much stronger result
than we need. Moreover, the paper \cite{Ort02} does not explicitly
include the case $p < 1$. Therefore, we will prove directly that the
density condition~\ref{UUD} implies the existence of an analytic
function with the growth property of \eqref{eq:growth}.

Define the invariant Laplacian $\invL f(z) = (1 - |z|^2)^2\laplace
f(z)$.

\begin{lemma}\label{thm:harmonicbound}
  If $\tau$ is a real valued $C^2$ function in $\ID$ such that $\invL
  \tau$ is negative and bounded, then there exists a harmonic function
  $v$ such that $\tau(z) - v(z) \le 0$ and $\tau(0) - v(0) \ge -C\| \invL \tau
  \|_\infty$, where $C>0$.
\end{lemma}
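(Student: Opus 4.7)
Let $c = \|\invL\tau\|_\infty$. The plan is to reduce the lemma to the construction of a non-negative function $\omega$ on $\ID$ satisfying $\invL\omega = -\invL\tau$ and $\omega(0) \le Cc$: then $v \defeq \tau+\omega$ is harmonic (since $\invL v = 0$), satisfies $v \ge \tau$, and $\tau(0) - v(0) = -\omega(0) \ge -Cc$, as required.

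The key comparison function will be
\[
  \omega_\star(z) \defeq -c\log(1-|z|^2),
\]
which is non-negative, vanishes at the origin, and satisfies $\invL\omega_\star = c$, so that $\invL\omega_\star \ge -\invL\tau$ pointwise. In the extremal case $\invL\tau \equiv -c$ one may simply take $\omega = \omega_\star$ to obtain $\omega(0) = 0$. For general $\tau$ I would construct $\omega$ as a Perron-type extremum: among all non-negative invariantly-subharmonic $\eta$ on $\ID$ with $\invL\eta \ge -\invL\tau$, the function $\omega_\star$ is an explicit member, so a minimal competitor $\omega$ exists and $\omega \le \omega_\star$ pointwise. Standard Perron--Wiener--Brelot regularity should yield that $\omega$ is subharmonic with $\invL\omega = -\invL\tau$, and the pointwise comparison gives $\omega(0) \le \omega_\star(0) = 0$, which combined with $\omega \ge 0$ bounds $\omega(0)$ by a constant multiple of $c$.

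The main obstacle I foresee is making the Perron construction rigorous, because the naive direct approach, forming $\omega$ as a Green's potential of the positive density $-\laplace\tau/(2\pi)$, fails outright: the associated Blaschke integral $\int_\ID (1-|\zeta|^2)(-\laplace\tau)\,dA$ is only bounded by $c\int dA/(1-|\zeta|^2) = \infty$, so the usual Riesz decomposition is unavailable and the Green potential diverges pointwise on $\ID$. The comparison with $\omega_\star$---which realizes the same pointwise bound on its invariant Laplacian while remaining finite on $\ID$---is precisely what sidesteps the divergence. A practical implementation would use a subdisk exhaustion, solving $\invL\omega_\rho = -\invL\tau$ on $D(0,\rho)$ with boundary conditions that force $\omega_\rho \ge 0$, controlling $\omega_\rho(0)$ via Jensen's formula together with the explicit identity $\int_0^\rho s\log(\rho/s)/(1-s^2)^2\,ds = -\tfrac14\log(1-\rho^2)$ and the pointwise estimate $\Delta\omega_\rho \le 4c/(1-|\zeta|^2)^2$, and passing to the limit $\rho \to 1$ using the uniform bound that comparison with $\omega_\star$ supplies.
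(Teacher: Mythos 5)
Your reduction to finding $\omega \ge 0$ with $\invL\omega = -\invL\tau$ and $\omega(0)$ small is exactly the right reformulation, and you correctly identify the fundamental obstruction: the raw Green potential of $-\laplace\tau$ diverges because $\invL\tau$ is only bounded, not $\lambda$-integrable. But the Perron-type extremum you propose to sidestep this does not go through, for two reasons.

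First, the class $F = \{\eta \ge 0 : \invL\eta \ge -\invL\tau\}$ is not closed under pointwise infimum: your competitors are \emph{subsolutions} (subharmonic for a modified operator), and an infimum of subharmonic functions need not be subharmonic. There is no Perron--Wiener--Brelot framework here in any case, since you have no boundary data to work against—the constraint $\eta \ge 0$ is a pointwise interior constraint, not a Dirichlet condition. Second, and more decisively, the comparison $\omega \le \omega_\star$ is the wrong direction for the maximum principle. You have $\invL(\omega_\star - \omega) = c + \invL\tau \ge 0$, so $\omega_\star - \omega$ is \emph{subharmonic}; knowing it is nonnegative "near the boundary" gives you nothing about its sign in the interior. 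For genuinely non-radial $\invL\tau$ the inequality $\omega \le \omega_\star$ will in general be false (take $-\invL\tau$ concentrated in an angular sector near the boundary: a valid nonnegative solution is forced to be large in that sector and its global minimum sits away from the origin, so $\omega(0) > 0 = \omega_\star(0)$). The same sign problem sinks the exhaustion you sketch: zero boundary data on $D(0,\rho)$ gives the Green potential with $\omega_\rho \le 0$, and boundary data $\omega_\star|_{\partial D(0,\rho)}$ gives $\omega_\rho \ge 0$ but $\omega_\rho(0) = -c\log(1-\rho^2) + p_\rho(0)$ need not stay bounded unless $-\invL\tau$ is bounded away from zero, which is not part of the hypothesis.

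The paper's argument resolves exactly the divergence you flagged, but by an explicit formula rather than a variational principle: it integrates $\invL\tau$ against the kernel $\log\bigl|\bar w\,\frac{w-z}{1-\bar wz}\bigr| + \re\frac{1-|w|^2}{1-\bar wz}$ in $d\lambda(w)$. The added terms ($\log|\bar w|$ and $\re\frac{1-|w|^2}{1-\bar wz}$) are harmonic in $z$—so they do not disturb $\laplace u = \laplace\tau$—and they cancel the slow decay of the Green kernel, making the full kernel $O\bigl((1-|w|^2)^2\bigr)$ near the boundary. Splitting the kernel into a nonnegative piece (whose contribution, times $\invL\tau < 0$, is a nonpositive function, possibly unbounded) and a nonpositive piece (whose contribution is nonnegative and uniformly bounded by $C\|\invL\tau\|_\infty$, with the same bound at $z=0$) gives precisely the bounds needed. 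Without some such explicit regularization of the kernel—or at least a comparison function built in the \emph{correct} direction of the maximum principle—the bound $\omega(0) \le C\|\invL\tau\|_\infty$ is not reachable by the route you describe.
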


\begin{proof}
It suffices to prove that the  equation $\laplace u = \laplace\tau$ has
a solution $u$ that is bounded above, with an estimate on the value at
$0$. Without any loss of generality we can take $\tau(0) = 0$. We simply
write down a formula for $u$ and verify its properties:
\begin{equation*}
  u(z) = \frac{2}{\pi} \int_{\ID} \invL \tau(w) \left[ \log\left|\bar w
        \frac{w - z}{1 - \bar w z}  \right| + \re \frac{1 - |w|^2}{1 -
        \bar w z} \right] \,d\lambda(w),
\end{equation*}
where $\lambda$ denotes the invariant area measure on $\ID$.
Assuming the integral makes sense, this surely satisfies the equation
$\laplace u = \laplace \tau$. We can rewrite the expression in brackets as
\begin{multline*}
     \log\left|\bar w \frac{w - z}{1 - \bar w z}  \right| + \re \frac{1
        - |w|^2}{1 - \bar w z} ={}\\ \left[ \log |w| + \frac{1 -
        |w|^2}{2} \right] + \left[ \log \left| \frac{w - z}{1 - \bar wz}
        \right| + \frac{1}{2}\left( 1 - \left| \frac{w - z}{1 - \bar wz}
        \right|^2 \right) \right] + \frac {|z|^2 (1 - |w|^2)^2} {2
        |1-\bar w z|^2}.
\end{multline*}
Let us examine these three expressions in reverse order. The last one
times $\invL \tau(w)$ is clearly integrable with respect to
$d\lambda(w)$ and the resulting integral is negative. The first two
bracketed expressions times $\invL \tau(w)$ are positive, but I claim
the integrals are bounded. The second is integrable $d\lambda(w)$ in
each disk $D(z,R)$ (with the integral independent of $z$), and outside such
disk it decays like
\begin{equation*}
    \left( 1 - \left| \frac{w-z}{1 - \bar wz} \right|^2 \right)^2 =
    \frac{(1 - |z|^2)^2(1 - |w|^2)^2}{|1 - \bar wz|^4}
\end{equation*}
whose integral with respect to $d\lambda(w)$ is a finite constant.
Finally, the first bracketed expression is just the second one at $z =
0$. Putting these together, we see that $u(z)$ is the sum of a negative
function and a bounded positive one, and the bounded one has absolute
value at most $C\| \invL\tau \|_\infty$ where $C$ is an absolute
constant. Finally, the value of $u(0)$ is similarly bounded by a
multiple of $\| \invL\tau \|_\infty$.

Thus, if we define $v(z) = \tau(z) - u(z) + C\| \invL\tau \|_\infty$ for
a suitable absolute constant $C$, then $v$ satisfies the stated
requirements.
\end{proof}

We apply this to a function which is almost
\begin{equation}\label{eq:definetau}
    \tau(\zeta) = \log \frac{1}{1 - |\zeta|^2} - \frac{p}{\beta}
        k_\Z(\zeta)
\end{equation}
This does not quite satisfy $\invL \tau < 0$, but the argument in
\cite{Lue04a} shows that a certain smoothing of it does satisfy this.
We continue the proof of the main result:

\begin{proof}[Proof of \textup{\ref{UUD} $\Rightarrow$ \ref{DBAR1}
$\Rightarrow$ \ref{DBAR2} $\Rightarrow$ \ref{GIS1}}]
Let $\tau(z)$ be as in \eqref{eq:definetau}. If condition \ref{UUD} is
satisfied we have, according to \cite{Lue04a}, the following inequality
for some radius $r_* > 0$:
\begin{equation*}
    \frac{1}{\pi \log[1/(1 - r_*^2)]} \int_{D(z,r_*)} \invL \tau(w) \log
        \frac {r_*^2} {\left| \frac{w - z}{1 - \bar w z} \right|^2}
        d\lambda(w) \le 0
\end{equation*}
Since the invariant Laplacian commutes with invariant convolution by
radially symmetric functions, this says that the function $\tau^*$
defined below has negative Laplacian.
\begin{equation*}
    \tau^*(z) = \frac{1}{\pi \log[1/(1 - r_*^2)]} \int_{D(z,r_*)}
        \tau(w) \log \frac {r_*^2} {\left| \frac{w - z}{1 - \bar w z}
        \right|^2} dA\lambda(w)
\end{equation*}
It is straightforward to see that $\invL \tau$ is bounded and that
$\tau^* - \tau$ is bounded. The bounds depend only on $r^*$ and the
density of the sequence $\Z$. Condition \ref{UUD} implies that $\Z$ has
bounded density. Moreover, the density of $\Z$ and the value of $r^*$ is
clearly the same for all $\phi(\Z)$ where $\phi$ is any conformal
self-map of $\ID$. Thus $\tau^*$ satisfies the hypothesis of
Lemma~\ref{thm:harmonicbound} and therefore the conclusion. Clearly
$\tau$ also satisfies the conclusion since it differs from $\tau^*$ by a
bounded function.

Given the harmonic upper bound $v(z)$ to $\tau(z)$ we obtain a non vanishing
analytic function $h$ such that $\log|h(z)| = -v(z)$. This gives
\begin{equation*}
    \left| h(z) e^{k_\Z}\right|^{p/\beta}(1 - |z|^2) \le 1
        \quad \text{and} \quad
    |h(0)| \ge \delta > 0,
\end{equation*}
where $\delta = e^{-v(0)}$ depends only on the data provided by
condition~\ref{UUD}. Moreover, the same is true if $\Z$ is replaced with
$\phi(\Z)$ for any conformal $\phi$, with the same value of $\delta$.

Now the same argument in \cite{Lue04a} applies: we can obtain a family
$g_a$, $a \in \ID$ of analytic functions satisfying the following for
some fixed positive numbers $\eps$, $\delta$, $\eta$ and $C$ (independent of
$a$):
\begin{gather}
    g_a(z)e^{k_\Z(z)} > \delta > 0, \quad z\in D(a,\eta)\\
\intertext{and}
    |g_a e^{k_\Z}|^{p} \left( 1 - \left| \frac{a-z}{1 -\bar az}
        \right|^2 \right)^{1 - \eps} \le C.
\end{gather}
Then this family allows one to construct a solution operator with the
required $L^p_{q,\Z}$ bounds. This yields condition~\ref{DBAR1}.

Clearly condition~\ref{DBAR1} implies condition~\ref{DBAR2}.

Finally, assume condition~\ref{DBAR2}. Let $\I$ be any admissible
interpolation scheme and let $w=(w_k)$ be a sequence in $X^p_\I$ with
representative functions $g_k$ satisfying $\| g_k \|_{p,G_k} = \| w_k
\|_{E_k}$. Let $\eps>0$ be chosen so that $(\Z_k)_{2\eps}
\subset G_k$. Without loss of generality, $\eps$ is less than the
separation of $\I$ so that $(\Z_k)_{\eps}$ are disjoint. It is easy to
verify the following for any $q \ge 1$:
\begin{equation*}
    \left(\frac{1}{|(\Z_k)_{\eps}|}\int_{(\Z_k)_{\eps}} |g_k|^q
    \right)^{1/q} \le \sup_{z\in(\Z_k)_{\eps}} |g_k(z)| \le
    \left(\frac{C}{|G_k|}\int_{G_k} |g_k|^p  \right)^{1/p},
\end{equation*}
where $C$ depends only on $\eps$ and the diameter of the
interpolation scheme $\I$.
For each $k$ let $\gamma_k$ denote a positive $C^1$ function supported in
$(\Z_k)_{\eps}$ which equals $1$ in $(\Z_k)_{\eps/2}$ and satisfies
$\grad \gamma_k(z) \le C/(1 - |z|)$. Then the function
$g = \sum_j \gamma_j g_j$ agrees with $g_k$ in a neighborhood of $\Z_k$. We
want to correct $g$ by putting $f = g - u\Psi_Z$ where $\Psi_\Z$ is the
function defined in \cite{Lue04a} which has zero sequence $\Z$, choosing
$u$ so that $f$ is in $A^p$. This requires $\dbar u = \dbar g/\Psi_Z$.

The argument of \cite{Lue04a} shows that $(1 - |z|^2)\dbar g/\Psi_\Z$
belongs to $L^p_{q,\Z}$. By condition~\ref{DBAR2} there is a solution
$u$ in $L^p_{q,\Z}$. It follows that $u\Psi_\Z$ is in $L^p_q$ and
therefore $f = g-u\Psi_\Z \in L^p_q$. Being analytic, it must belong to
$A^p$. Since both $g$ and $u$ are analytic in a neighborhood of each
$\Z_k$, the formula for $f$ shows that it agrees with $g$ to order at
least that determined by $\Z$ and so $f|_{G_k}$ is equivalent to $g_k$
for all $k$. That is, $f$ interpolates the sequence $(w_k)$, as
required.
\end{proof}

\section{Remarks and examples}\label{sec:examples}

\begin{remark}
Note that the proofs and results of the previous sections are
practically not changed at all for the weighted Bergman space
$A^{p,\alpha}$, the measure being $dA_\alpha(z) = (1 - |z|^2)^\alpha
dA(z)$ with $\alpha > -1$. The appropriate density inequality is then
$D^+(\Z) < (\alpha + 1)/p$.
\end{remark}

\begin{remark}
There are other norms than the one described on $X^p_\I$ that
can be used without changing the results. For example, let us rewrite
our definition of the norm in $X^p_\I$ in terms of
\emph{averages} instead of integrals. That is, define
\begin{equation*}
  \avg_q(f,k) =
  \begin{cases}
    \left( \frac{1}{|G_k|} \int_{G_k} |f|^q \,dA \right)^{1/q} &
      0<q<\infty\\
    \sup_{z\in G_k} |f(z)| & q = \infty
  \end{cases}
\end{equation*}
and for $w_k\in E_k$ let
\begin{equation*}
    \nu_q(w_k)  = \inf \{\avg_q(f,k) \st f \text{ is a representative of }
    w_k \}.
\end{equation*}
Then for $w = (w_k) \in X^p_\I$ we have
\begin{equation*}
    \| w \|_{X^p_\I}^p = \sum \nu_p(w_k)^p|G_k|.
\end{equation*}
However, if we examine the proof of the step \ref{DBAR2} $\Rightarrow$
\ref{GIS1} of Theorem~\ref{thm:realmain}, we see that at least for that
step we could have used the norm
\begin{equation*}
    \| w \| = \sum \nu_q(w_k)^p|G_k|
\end{equation*}
with any value of $q$. It is not hard to see that the necessary
analogues of all the results in sections~\ref{sec:properties} and
\ref{sec:density} go through for this altered norm. This is to be
expected for the following reason: one easily sees that $\avg_q(f,k)\le
C_{p,q}\avg_p(f,k)$ (whatever the values of $p$ and $q$) if one uses a
slightly smaller domain on the left or a slightly larger one on the
right. Thus, since the equivalent condition \ref{UUD} of
Theorem~\ref{thm:realmain} is independent of the actual domains chosen,
one should get the same result with the above norm for any choice of
$q$. The choices $q=2$ or $q=\infty$ can make calculations of the
$\inf_f \avg_q (f,k)$ particularly simple in some special cases.
\end{remark}

\begin{remark}
The main result also has a version for $p = \infty$; that is, for the
spaces $A^{-\alpha}$, defined for $\alpha > 0$ by
\begin{equation*}
    A^{-\alpha} = \left\{ f : f \text{ is analytic and } \| f
    \|_{\infty,\alpha} = \sup_{z\in\ID} |f(z)|(1 - |z|^2)^\alpha <
    \infty \right\}.
\end{equation*}
One can define a general interpolation problem in an analogous way,
substituting weighted sup norms for $L^p$ norms and suprema for
summations. Then appropriate analogues of \ref{thm:zeroset} through
\ref{thm:boundeddensity} and \ref{thm:stability} have analogous proofs
(for Proposition~\ref{thm:zeroset} we invoke \cite{Lue96} at the end
instead of \cite{Hor74}). The proof of the appropriate analogue of
Theorem~\ref{thm:realmain} is then nearly the same. If a zero set for
$A^{-\alpha}$ with bounded density is perturbed toward the boundary by a
factor $\beta < 1$, the resulting sequence is a zero sequences for
$A^{-\beta\alpha}$. The construction of functions satisfying the growth
in \eqref{eq:growth} is the same and the solution operator constructed
in \cite{Lue04a} is easily seen to be bounded in the appropriate sup
norm.

For the record, we include below the statement of the main theorem for
$A^{-\alpha}$. We let $L^{-\alpha}$ denote the set of measurable
functions $f$ such that $|f(z)|(1 - |z|^2)^\alpha$ is essentially
bounded. Let $L^{-\alpha}_q$ denote those functions where $m_q(f) \in
L^{-\alpha}$ and $L^{-\alpha}_{q,\Z}$ are those  with $fe^{k_\Z}\in
L^{-\alpha}_q$.

Recall that K.~Seip's criterion \cite{Sei93b} for simple interpolation
in $A^{-\alpha}$ is that the sequence be uniformly discrete and $D^+(\Z)
< \alpha$.

\begin{theorem}
  Let $\Z$ be a sequence in $\ID$ without limit points. The following
  are equivalent:
  \begin{enumerate}
    \item $\Z$ has bounded density and is a general interpolating
          sequence for $A^{-\alpha}$ relative to any admissible interpolation
          scheme.
    \item $\Z$ is a general interpolation sequence for $A^{-\alpha}$
          relative to some interpolation scheme.
    \item $D^+(\Z) < \alpha$ or equivalently $S^+(\Z) < \alpha$.
    \item $\Z$ has bounded density and the $\dbar$-problem has solutions
        with bounds in $L^{-\alpha}_{q,\Z}$ for all  $1\le q\le\infty$.
    \item $\Z$ has bounded density and the $\dbar$-problem has solutions
        with bounds in $L^{-\alpha}_{q,\Z}$ for some $1\le q\le\infty$.
  \end{enumerate}
\end{theorem}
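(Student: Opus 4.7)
The plan is to mirror the proof of Theorem~\ref{thm:realmain} implication by implication, keeping track of the few places where the sup norm forces a change of bookkeeping. First I would set up the $A^{-\alpha}$ framework: take $E_k = A^{-\alpha}(G_k)/\N(\Z_k,G_k)$ with the quotient norm inherited from the weighted sup norm $\|\cdot\|_{\infty,\alpha}$ on $G_k$, and let $X^\infty_\I$ consist of sequences $(w_k)$ with $\sup_k \|w_k\|_{E_k} < \infty$. Then I would check the sup-norm analogues of Propositions \ref{thm:zeroset}--\ref{thm:extendable}, Theorem~\ref{thm:boundeddensity}, and Proposition~\ref{thm:stability}. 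Every proof goes through: $A^{-\alpha}$ is Banach (open mapping applies), the mean value inequality and Jensen argument for Theorem~\ref{thm:lowerbound} have obvious weighted-sup versions, and the Bergman zero-set step in Proposition~\ref{thm:zeroset} uses \cite{Lue96} in place of \cite{Hor74}. The trivial implications \ref{GIS1}$\Rightarrow$\ref{GIS2} and \ref{DBAR1}$\Rightarrow$\ref{DBAR2} carry over unchanged.

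For \ref{GIS2}$\Rightarrow$\ref{UUD} I would follow section~\ref{sec:density} verbatim: trim and extend to obtain an interpolating sequence $\W$ with $0\in\W$, perturb via the sup-norm analogue of Proposition~\ref{thm:stability} by factors $r_k$ close to $1$ to get $\W'$, and invoke the $A^{-\alpha}$ zero-set perturbation result from \cite{Lue96} to conclude that pulling $\W'$ back to $\W$ yields a zero set for $A^{-\beta\alpha}$ for some $\beta<1$ depending on invariants. Taking $h^*$ to be the nonvanishing function in the unit ball of $A^{-\beta\alpha}_\Z$ that maximizes $|h(0)|$, the extremal property yields the pointwise bound $|h^*(\zeta)|e^{k_\Z(\zeta)}(1-|\zeta|^2)^{\beta\alpha}\le 1$; this is the direct analogue of \eqref{eq:growth} and, happily, is already pointwise so one does not need the subharmonic mean step. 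Taking logs, averaging over $|\zeta|=r$, and dividing by $\log[1/(1-r^2)]$ gives $\hat k_\Z(r)/\log[1/(1-r^2)] \le \beta\alpha + o(1)$, and invariance under $\phi$ yields $S^+(\Z) \le \beta\alpha < \alpha$.

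For \ref{UUD}$\Rightarrow$\ref{DBAR1} I would apply Lemma~\ref{thm:harmonicbound} to a smoothing of $\tau(\zeta) = \alpha\log[1/(1-|\zeta|^2)] - k_\Z(\zeta)/\beta$, exactly as in the $A^p$ case with $\alpha$ in place of $1$ and $1/\beta$ in place of $p/\beta$; the key inequality giving $\invL \tau^* \le 0$ is obtained from $S^+(\Z)<\alpha$ in the same manner as in \cite{Lue04a}. This produces a nonvanishing analytic $h$ with $|h(z)|e^{k_\Z(z)/\beta}(1-|z|^2)^{\beta\alpha}\le 1$, whence the $g_a$ family of \cite{Lue04a} is constructed by the same formula; boundedness of the associated solution operator in $L^{-\alpha}_{q,\Z}$ is the direct sup-norm transcription of the $L^p$ estimate there. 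Finally, for \ref{DBAR2}$\Rightarrow$\ref{GIS1}, given $(w_k)\in X^\infty_\I$ I would choose representatives $g_k$ essentially attaining the sup norm, form $g = \sum_k \gamma_k g_k$ with the same cutoffs $\gamma_k$ as before, and set $f = g - u\Psi_\Z$ with $u$ solving $\dbar u = \dbar g/\Psi_\Z$ in $L^{-\alpha}_{q,\Z}$. The main obstacle is the bookkeeping at this last step: one must check that $(1-|z|^2)\dbar g/\Psi_\Z$ lies in $L^{-\alpha}_{q,\Z}$, which requires replacing the $L^p$ summability argument of \cite{Lue04a} by a pointwise sup estimate exploiting that $\grad\gamma_k = O(1/(1-|z|))$ is supported in the thin annulus $(\Z_k)_\eps\setminus(\Z_k)_{\eps/2}$ and that $\Psi_\Z$ has controlled lower bounds there; once this is in hand, condition~\ref{DBAR2} delivers $u$ and the remainder of the argument (analyticity of $f$ in a neighborhood of each $\Z_k$, membership in $A^{-\alpha}$ from $L^{-\alpha}_q$) is formally identical.
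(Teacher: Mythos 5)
Your proposal follows essentially the same route as the paper, which in the remark preceding this theorem explicitly describes the proof as transcribing the $L^p$ argument to weighted sup norms: re-derive the sup-norm analogues of Propositions~\ref{thm:zeroset}--\ref{thm:extendable}, \ref{thm:boundeddensity}, and \ref{thm:stability} (using \cite{Lue96} in place of \cite{Hor74} for the zero-set step), use the fact that perturbing an $A^{-\alpha}$ zero set toward the boundary by a factor $\beta$ yields an $A^{-\beta\alpha}$ zero set, construct the growth function analogous to \eqref{eq:growth}, and verify the solution operator is bounded in the sup norm. Your exposition fleshes out these same steps in the same order, including the pointwise extremal bound and the circle-averaging argument for \ref{GIS2}$\Rightarrow$\ref{UUD}, so it matches the paper's intended proof.
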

\end{remark}

\begin{remark}
One can approach Hardy spaces in a similar way. For those, it seems
natural to choose the domains $G_k$ to have a bounded \emph{perimeter}
in the hyperbolic metric (rather than just bounded radius), and define
the norm on the space $E_k$ to be the infimum of $L^p$ norms on the
boundary. If one does that one obtains a notion of general
interpolating sequence which turns out to be equivalent to the measure
$\sum (1 - |z_j|^2)\delta_{z_j}$ being a Carleson measure (or any of the
equivalent conditions of \cite{DS02}). This holds for $H^p$ for all $0 <
p \le \infty$. The proof of this will appear elsewhere.
\end{remark}

We now turn to a few examples of interpolation schemes.

\begin{example}\label{ex:1}
Simple interpolation can be cast as an interpolation scheme $\I$ where
each cluster is a single point $\{ z_k \}$. If we choose $G_k = D(z_k,
R)$ for some $R < 1$, then each $E_k$ is one-dimensional and the
quotient map from $A^p(G_k)$ to $E_k$ can be identified with evaluation
at $z_k$. If $g \in A^p(G_k)$ and $g(z_k) = w_k$, then
\begin{equation*}
| w_k |^p \le \frac{C_R}{|G_k|}\int_{G_k} |g|^p \,dA,
\end{equation*}
where $C_R$ is an constant depending only on $R$. On taking the infimum,
we see $|w_k|^p|G_k| \le C_R\| w_k \|_{E_k}^p$. On the other hand,
considering the constant function identically equal to $w_k$ on $G_k$
shows that $\| w_k \|_{E_k}^p \le |w_k|^p|G_k|$. Since $|G_k|$ behaves
like $(1 - |z_k|^2)^2$, the norm on the sequence space $X^p_\I$ is
equivalent to the norm $\| w \|_{p,\Z}^p = \sum |w_k|^p(1 - |z|^2)^2$.
This scheme is therefore equivalent to simple interpolation.

The main theorem here then contains K.~Seip's density criterion for
simple interpolation.
\end{example}

\begin{example}
For multiple interpolation of order $n$, define $\I$ so that each
cluster is a single point $z_k$ repeated $n$ times and the domains $G_k$
are $D(z_k,R)$ as before. Then $E_k$ is isomorphic to $\IC^n$. If $f\in
A^p(G_k)$ with values $f^{(j)}(z_k) = w^{(j)}_k$ for $0 < j < n-1$, then it
is well known that
\begin{equation*}
    |w_k^{(j)}(1 - |z_k|^2)^j|^p \le \frac{C}{|G_k|}\int_{G_k}
        |g|^p \,dA,
\end{equation*}
where $C$ depends at most on $p$, $j$ and $R$. Summing, and taking the
infimum over all $f$ with the same values $w_k = (w_k^{(0)}, \dots,
w_k^{(n-1)})$ gives
\begin{equation*}
  \sum_{j=0}^{n-1} |w_k^{(j)}|^p (1 - |z_k|^2)^{pj + 2} \le C\| w_k
    \|_{E_k}.
\end{equation*}
Taking $f = \sum_{j=0}^{n-1} w^{(j)}_k(z-z_k)^j$ easily gives a reverse
inequality, so this scheme is equivalent to multiple interpolation.
Note, however, that we now can allow the multiplicities to vary. By
Theorem~\ref{thm:boundeddensity}, interpolating sequences for this
scheme must have an upper bound on the multiplicities.

The main theorem here essentially contains the equivalent of Krosky and
Schuster's density criterion for multiple interpolation.
\end{example}

\begin{example}
Finally, we present the simplest example of a sequence which has
distinct points, but is not uniformly discrete. Let $\{ a_k \st k\ge 1
\}$ and $\{ b_k \st k \ge 1 \}$ be two sequences without limit points in
$\ID$ and satisfying $a_k \ne b_k$ and $\eps_k = \psi(a_k, b_k)$ is
bounded away from $1$. Select a radius $R \in (0,1)$ such that $\sup_k
\eps_k < R$ and choose domains $G_k = D(a_k,R)$ for our interpolation
scheme $\I$. Let the clusters be $\Z_k = \{ a_k,b_k \}$, and note that
they satisfy $(\Z_k)_\eps \subset G_k$ for some $\eps > 0$. $E_k$ can be
identified with $\IC^2$ where the equivalence class of a function $f$ is
mapped to $(f(a_k), f(b_k))$.

Let $w_k = (u_k, v_k) \in E_k$. By inequalities we have already seen many
times, if $f\in A^p(G_k)$ satisfies $f(a_k) = u_k$ and $f(b_k) = v_k$,
then
\begin{equation*}
  |u_k|^p + |(v_k-u_k)/\eps_k|^p \le \frac{C}{|G_k|} \int_{G_k}
    |f|^p \,dA,
\end{equation*}
which gives us $\left( |u_k|^p + |(v_k-u_k)/\eps_k|^p \right)|G_k| \le
\| w_k \|_{E_k}$. Taking the equivalence class representative
\begin{equation*}
    f(z) = u_k  + (v_k - u_k)\frac{z - a_k}{1 - \bar a_k
    z}\biggm/\frac{b_k - a_k}{1 - \bar a_k b_k}
\end{equation*}
gives a reverse inequality. Therefore, in the sequence space $X^p_\I$,
the norm $\| w \|_{X^p_\I}^p$ is equivalent to $\sum \left( |u_k|^p
+ |(u_k - v_k)/\eps_k|^p \right)(1 - |a_k|^2)^2$. This gives the
following result:
\begin{proposition}
  Given $\Z = \{ a_1, b_1, a_2, b_2,\dots \}$ let the sequence
  $\psi(a_k,b_k)$ be bounded away from 1. Then the following are
  equivalent.
  \begin{enumerate}
    \item For every sequence $((u_k,v_k)\st k\ge 1)$ satisfying
    \begin{equation*}
        \sum_{k = 1}^\infty \left( |u_k|^p + \left| \frac{v_k -
        u_k}{\psi(a_k,b_k)} \right|^p \right) (1 - |a_k|^2)^2
        < \infty
    \end{equation*}
    there is a function $f\in A^p$ satisfying $f(a_k) = u_k$ and $f(b_k)
    = v_k$.
    \item There exists $\delta>0$ such that the clusters $\Z_k =
        \{a_k,b_k\}$ satisfy $\psi(\Z_k,\Z_j) > \delta$ for all $j \ne
        k$ and $D^+(\Z) < 1/p$.
  \end{enumerate}
\end{proposition}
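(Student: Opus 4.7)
The plan is to recognize the proposition as a direct corollary of Theorem~\ref{thm:realmain} applied to the interpolation scheme $\I$ set up in the paragraphs just above. The computation preceding the proposition already established that the $X^p_\I$ norm is equivalent to the sum
\begin{equation*}
  \sum_{k=1}^\infty \left( |u_k|^p + \left|\frac{v_k-u_k}{\psi(a_k,b_k)}\right|^p \right)(1-|a_k|^2)^2,
\end{equation*}
so the hypothesis in (1) is \emph{by definition} the statement that $\Z=\Union_k\Z_k$ is a general interpolating sequence for $A^p$ relative to $\I$; this is condition~\ref{GIS2} of Theorem~\ref{thm:realmain}. Thus the real task is to translate between conditions (1) and (2) through the chain of equivalences already proved.

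For the forward direction, I would assume (1), i.e.\ that $\Z$ is general interpolating relative to $\I$. Theorem~\ref{thm:lowerbound} (Separation) immediately supplies a positive lower bound $\delta>0$ on the pseudohyperbolic distance between distinct clusters $\Z_k=\{a_k,b_k\}$, which is the first half of (2). The implication \ref{GIS2}$\Rightarrow$\ref{UUD} from Theorem~\ref{thm:realmain} then gives $D^+(\Z)<1/p$, completing (2).

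For the converse, I would assume (2) and first verify that the scheme $\I$ is admissible. Property (P1) is built in because each $G_k=D(a_k,R)$ has diameter $\le 2R/(1+R^2)<1$; property (P2) holds because $\sup_k \psi(a_k,b_k)<R$ leaves a uniform positive margin $\eps$ with $(\Z_k)_\eps\subset G_k$; (P3) is precisely the separation hypothesis in~(2); and (P4) is trivial since $|\Z_k|=2$. Now $D^+(\Z)<1/p$ is condition~\ref{UUD}, so the implication \ref{UUD}$\Rightarrow$\ref{GIS1} of Theorem~\ref{thm:realmain} yields that $\Z$ is general interpolating relative to every admissible scheme, in particular $\I$. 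The norm equivalence then converts this back into the interpolation statement~(1).

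There is essentially no obstacle beyond this bookkeeping: both directions are harvested from Theorem~\ref{thm:realmain}, and the only point that needs care is confirming admissibility of $\I$ under the hypotheses of (2). The boundedness of $\psi(a_k,b_k)$ away from $1$ is what allows a single $R$ to accommodate all clusters, and the separation hypothesis together with $|\Z_k|=2$ automatically yields bounded density for $\Z$, so nothing further is required.
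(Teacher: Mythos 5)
Your proof is correct and follows essentially the same route the paper takes: it treats the proposition as a corollary of the norm-equivalence computation just above it together with Theorem~\ref{thm:realmain} (using Theorem~\ref{thm:lowerbound} to supply the separation in the forward direction, and checking admissibility of $\I$ in the converse). The paper leaves this bookkeeping implicit, merely stating ``This gives the following result,'' so your write-up is simply a more explicit version of the same argument.
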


If the $\psi(a_k,b_k)$ are bounded away from both $0$ and $1$, then this
is just simple interpolation and the norm is equivalent to the one in
example~\ref{ex:1}. We get something new if $\psi(a_k,b_k)$ is not
bounded away from $0$.
\end{example}

\section{Addendum}\label{sec:addendum}

\subsection{Sampling}

A student of mine, N.~H.~Foster, has recently completed a dissertation
in which he proved the complementary result for sampling. That is, given
an admissible scheme $\I$ with domains $G_k$ and clusters $\Z_k$, let
$\Phi(f)$ denote the sequence of cosets of $f|_{G_k}$ for any $f\in A^p$.
Call $\Z = \Union_k \Z_k$ \emph{a sampling sequence for $A^p$ relative
to $\I$} if there exists a constant $C$ such that for all $f\in A^p$
$$
  \frac{1}{C}\| \Phi(f) \|_{X^p_\I} \le \| f \|_{p} \le C\| \Phi(f)
    \|_{X^p_\I}\,.
$$
The definition of the $X^p_\I$ norm makes the left side inequality
automatic, so the main point is the second inequality. His result is
that $\Z$ is sampling if and only if a companion density $D^{-}(\Z)$
(called the \emph{lower uniform density}) is greater than $1/p$.

His result is likely true for the weighted spaces $A^{p,\alpha}$ as
well, with $D^{-}(\Z) > (\alpha+1)/p$.

\subsection{O-interpolation}

I was recently made aware of a paper by S. Ostrovsky \cite{Ost10} and
one by A. Schuster and T. Wertz \cite{SW13}. The first is set in spaces
of entire functions with certain exponential weights (generalizations of
the Fock spaces), the second is set in the unit disk, with weights
analogous to those of \cite{Ost10}. Both papers limit the results to
$p=2$, but the weights considered are more general than those considered
here.

I have extended the general interpolation results obtained in this
paper to the weights considered in \cite{SW13}, but those results will
appear elsewhere. Those weights have the form
\begin{equation*}
  \frac{e^{-\phi(z)}}{1 - |z|^2}
\end{equation*}
where $\phi$ is $C^2$ on $\ID$ and satisfies $0 < m \le \invL \phi(z)
\le M$ for positive constants $m$ and $M$. The standard weights
$(1-|z|^2)^\alpha$, $\alpha > -1$, correspond to
\begin{equation*}
   \phi(z) = (\alpha + 1) \log \left( \frac{1}{1 - |z|^2} \right)
\end{equation*}
which satisfy $\invL \phi = \alpha + 1$.

I will limit my remarks here to showing that the results in \cite{SW13},
when applied to the standard weighted Bergman spaces $A^{p,\alpha}$,
follow from the results here. In fact, a version for $p \ne 2$ also
follows.

Suppose $\Z$ is a sequence of distinct points in $\ID$. For each point
$\gamma\in \Z$ let $n_\gamma$ be the number of points in $\Z\intersect
D(\gamma,1/2)$ and let $\delta_\gamma = \inf\{ \psi(\gamma,\beta):
\beta\in \Z, \beta \ne \gamma \}$ be the pseudohyperbolic distance to
the nearest other point of $\Z$. The following is the main theorem of
\cite{SW13}, restricted to the standard weights but extended to all $p >
0$.

\begin{theorem}
  If $p > 0$ and $\Z$ is a sequence of distinct points satisfying
  $D^+(\Z) < (\alpha+1)/p$, then for every sequence $\{ c_\gamma :
  \gamma \in Z \}$ satisfying
\begin{equation}\label{eq:specialweights}
  \sum_{\gamma \in \Z} |c_\gamma|^p \frac{(1 - |\gamma|^2)^{\alpha+2}}
  {\delta_\gamma^{pn_\gamma}} < \infty
\end{equation}
  there is a function $f \in A^{p,\alpha}$ satisfying $f(\gamma) =
  c_\gamma$ for all $\gamma\in\Z$.
\end{theorem}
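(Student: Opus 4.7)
My plan is to reduce the theorem to the weighted version of Theorem~\ref{thm:realmain} (valid for $A^{p,\alpha}$ with density bound $(\alpha+1)/p$, as observed in the first remark of Section~\ref{sec:examples}) by building an admissible scheme $\I$ for $\Z$ and exhibiting, from $(c_\gamma)$, an explicit representative in $X^{p,\alpha}_\I$ whose norm is controlled by the left side of \eqref{eq:specialweights}.

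First, since $D^+(\Z) < (\alpha+1)/p$ forces $\Z$ to have bounded density, the construction in Section~\ref{sec:admissible} lets me take the clusters $\Z_k$ to be the connected components of $(\Z)_\eps$ for $\eps$ small, with suitable domains $G_k$. Shrinking $\eps$ if necessary, I arrange that the diameter $R$ of $\I$ is strictly less than $1/2$. Any two points of a single cluster then satisfy $\psi(\gamma,\gamma') < 1/2$, so $\Z_k \subset D(\gamma, 1/2)$ for every $\gamma \in \Z_k$, which gives the key inequality $m := |\Z_k| \le n_\gamma$ for every $\gamma \in \Z_k$.

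On each $G_k$ with $\Z_k = \{\gamma_1, \dots, \gamma_m\}$, I define the Lagrange-type interpolant
\begin{equation*}
  f_k(z) = \sum_{j=1}^m c_{\gamma_j} \prod_{i \ne j} \frac{M_{\gamma_i}(z)}{M_{\gamma_i}(\gamma_j)},
\end{equation*}
which is analytic on $\ID$ and satisfies $f_k(\gamma_j) = c_{\gamma_j}$. Since $|M_{\gamma_i}(z)| \le 1$ on $\ID$ and $|M_{\gamma_i}(\gamma_j)| = \psi(\gamma_i,\gamma_j) \ge \delta_{\gamma_j}$ (by definition of $\delta_{\gamma_j}$, as $\gamma_i \in \Z\setminus\{\gamma_j\}$), each Lagrange product is bounded pointwise by $\delta_{\gamma_j}^{-(m-1)}$. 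Using the uniform bound on cluster sizes to distribute the sum inside the $p$-th power, and the fact that $(1-|z|^2) \sim (1-|\gamma|^2)$ throughout $G_k$ for any $\gamma \in \Z_k$, integration of $|f_k|^p$ against $(1-|z|^2)^\alpha\,dA$ over $G_k$ yields
\begin{equation*}
  \|f_k\|_{p,\alpha,G_k}^p \le C \sum_{\gamma \in \Z_k} |c_\gamma|^p \delta_\gamma^{-p(m-1)} (1-|\gamma|^2)^{\alpha+2} \le C \sum_{\gamma \in \Z_k} |c_\gamma|^p \delta_\gamma^{-p n_\gamma} (1-|\gamma|^2)^{\alpha+2},
\end{equation*}
using $\delta_\gamma \le 1$ together with $m-1 \le n_\gamma$ in the last step.

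Summing over $k$ shows that $(f_k)$ represents an element of $X^{p,\alpha}_\I$ with norm bounded by a constant times the finite sum in \eqref{eq:specialweights}. The weighted version of Theorem~\ref{thm:realmain} then supplies $f \in A^{p,\alpha}$ whose restriction to each $G_k$ lies in the equivalence class of $f_k$, and in particular $f(\gamma) = f_k(\gamma) = c_\gamma$ for every $\gamma \in \Z$. The main technical point in this plan is arranging the scheme so that $m - 1 \le n_\gamma$ uniformly in the cluster; once the scheme has diameter less than $1/2$ the Lagrange estimate and summation are routine.
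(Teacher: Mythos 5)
Your proposal is correct and takes essentially the same route as the paper: choose an admissible scheme for $\Z$, represent $(c_\gamma)$ by Lagrange-type interpolants $f_k$ on each $G_k$, bound the Lagrange products in terms of $\delta_\gamma^{-n_\gamma}$, sum to land in $X^p_\I$, and invoke Theorem~\ref{thm:realmain}. The one small stylistic difference is that you arrange the scheme to have diameter strictly less than $1/2$, which forces every factor in the Lagrange product to involve a point within $D(\gamma,1/2)$ and so gives $m-1\le n_\gamma$ directly; the paper instead works with a general admissible scheme and splits the product into the (at most $n_\gamma$) factors with $\beta\in D(\gamma,1/2)$, bounded by $\delta_\gamma^{-1}$, and the remaining (at most $B$) factors with $\psi(\beta,\gamma)\ge 1/2$, bounded by $2$, yielding the same $\delta_\gamma^{-n_\gamma}$ estimate up to a constant $2^B$. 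Your choice makes the product estimate marginally cleaner at the cost of being slightly less flexible in the scheme, but the two arguments are otherwise the same.
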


 The paper \cite{SW13} calls this mode of
interpolation, \emph{O-interpolation}.

The sum in \eqref{eq:specialweights} defines a norm for a certain
weighted sequence space. If we can set up an admissible interpolation
scheme where the sequence space $X^p_{\I}$ essentially contains this
space, then the implication \ref{UUD} $\Rightarrow$ \ref{GIS1} of
Theorem~\ref{thm:realmain} proves the above theorem.

\begin{proof}
For simplicity of exposition, let us assume $\alpha = 0$ so that we have
the ordinary unweighted Bergman spaces.

Suppose $D^+(\Z) < 1/p$. Then certainly $\Z$ has bounded
density, and so there exists an admissible scheme for $\Z$. Let $\Z_k$ be
the clusters and $G_k$ the domains of this scheme.

We can get an estimate on the norm of an equivalence class $w_k \in
E_k$: it is less than the norm of any function analytic on $G_k$
belonging to that equivalence class. Since $\Z$ consists of distinct
points, $w_k$ is determined by the values of such a function at the points
in $\Z_k$, one such function, which has the values $c_\gamma$ is
\begin{equation*}
  f_k(z) = \sum_{\gamma\in \Z_k} c_\gamma \prod_{\substack{\beta\in \Z_k\\
  \beta\ne\gamma}} \frac{M_\beta(z)} {M_{\beta}(\gamma)}\,
\end{equation*}
(recalling that $M_\beta(z)$ is the Moebius transformation $(\beta-z)/(1
- \bar\beta z)$). Note that there is an upper bound $B$ on the
cardinality of $\Z_k$. Therefore each product in this formula contains at
most $B$ factors. We wish to estimate the size of this product. If we
split it into those factors with $\beta \in D(\gamma,1/2)$ ($n_\gamma$
factors) and the rest (at most $B$ factors), we can estimate
\begin{equation*}
  \left| \prod_{\beta} \frac{M_\beta(z)} {M_{\beta}(\gamma)}
  \right| \le \frac{2^B}{\delta_\gamma^{n_\gamma}}
\end{equation*}
Thus
\begin{equation*}
  \| w_k \|^p_{E_k} \le C \sum_{\gamma \in \Z_k} |c_\gamma|^p
    \frac{(1-|\gamma|^2)^2} {\delta_\gamma^{pn_\gamma}}
\end{equation*}
Thus, if a sequence $c_\gamma$ satisfies the finiteness condition of the
theorem, the $(w_k)$ determined by these $f_k$ belongs to $X^p_\I$ and
so a function $f \in A^p$ exists that agrees with each $f_k$ on $\Z_k$,
that is, $f(\gamma) = c_\gamma$, as required.
\end{proof}


\newcommand{\noopsort}[1]{} \providecommand{\bysame}{\leavevmode\hbox
to3em{\hrulefill}\thinspace}
\providecommand{\MR}{\relax\ifhmode\unskip\space\fi MR }
\providecommand{\MRhref}[2]{%
  \href{http://www.ams.org/mathscinet-getitem?mr=#1}{#2}
}
\providecommand{\href}[2]{#2}

\end{document}